\theoremstyle{plain}
\newtheorem{proposition}{Proposition}
\newtheorem{theorem}[proposition]{Theorem}
\newtheorem{corollary}[proposition]{Corollary}
\newtheorem{lemma}[proposition]{Lemma}
\newtheorem*{proposition*}{Proposition}
\newtheorem*{theorem*}{Theorem}
\newtheorem*{corollary*}{Corollary}
\newtheorem*{lemma*}{Lemma}
\newtheorem*{remark*}{Remark}
\newtheorem*{example*}{Example}
\newcommand{\R}{\mathbb{R}}
\newcommand{\C}{\mathbb{C}}
\newcommand{\tr}{\mbox{Tr}}
\begin{document}

\title{Master Equation and Perturbative Chern-Simons theory}

\author{Vito Iacovino}

\address{}

\email{iacovino@mpim-bonn.mpg.de}

\date{version: \today}

%%%%%%%%%%%%%%%%%%%%%%%%%%%%%%%%%%%%%%%%%%%%%%%%%%%%%%%%%%%
%%%%%%%%%%%%%%%%%%%%%%%%%%%%%%%%%%%%%%%%%%%%%%%%%%%%%%%%%%%
%%%%%%%%%%%%%%%%%%%%%%%%%%%%%%%%%%%%%%%%%%%%%%%%%%%%%%%%%%%

\begin{abstract}
We extend the Chern-Simons perturbative invariant of Axelrod and Singer \cite{AS} to non-acyclic connections. We construct a solution of the quantum master equation on the space of functions on the cohomology of the connection. We prove that this solution is well defined up to master homotopy. 
We discuss also invariants of links.
\end{abstract}

\maketitle

\section{introduction}
Let $M$ be a compact oriented three manifold. Consider a flat connection on a principal bundle over $M$ with compact structural group. Let $\mathfrak{g}$ be the related Lie algebra bundle. 
%Assume that the Lie algebra is semi-simple and consider a flat connection $A_0$ on the principal bundle.
 
If the cohomology $H^*(M,\mathfrak{g})$ of the flat connection is trivial, Axelrod and Singer (\cite{AS}) and Kontsevich (\cite{Ko}) proved that the perturbative expansion of the Chern-Simons theory leads to topological invariants of the manifold $M$. 
%Later, these invariants were further elaborated on by Bott and Cattaneo \cite{BC1}, \cite{BC2}.  
%In order to define the perturbation expansion as in \cite{AS} it is necessary to choice a metric of the manifold. The dependence on the metric can be cancelled by subtracting an appropriate multiple of the Chern-Simons invariant for the metric connection (the gravitational Chern-Simons invariant).

Non acyclic connections have been recently considered by Costello (\cite{Co}). 
The perturbative expansion of the partition function should lead to a function on the cohomology of the connection $H^*(M,\mathfrak{g})$ that solves the quantum master equation and is well defined up to master homotopy. The coefficients of the solution can be considered as a quantum generalization of the Massey products. In (\cite{Co}), Costello was able to construct the solution up to the constant term.
%Costello constructs a local solution using \cite{Ko}.
His solution was found as application of the general theory for the quantization and renormalization of gauge theories developed in \cite{Co} and using an abstract local to global argument.
% and as application he quantized the Chern Simons theory starting from the local solution provided by the Kontsevich solution.

In this paper we construct the full solution of the master equation. The solution is written in terms of a perturbative expansion in such a way that it is not necessary to renormalize the theory.
% This is a big simplification with respect to \cite{Co}.
We prove that up to master homotopy only the constant term of the perturbative expansion depends on the metric. The dependence on the metric can be canceled by subtracting an appropriate multiple of the gravitational Chern-Simons invariant. As in \cite{AS} this involves a choice of frame of $TM$.

%  and prove that the partition function is independent by the metric up to an anomaly that can be compensated adding an opportune multiple of the gravitational Chern Simons functional.  
%The definition of \cite{AS} depends by a metric on the manifold. The dependence by the metric has an anomaly. The definition of \cite{Ko} doesn't dependent by Riemannian concepts but it depends by a choice of a frame of the manifold.

The solution of the master equation is written, analogously to \cite{AS}, in terms of an expansion of Feynman graphs. In this case the trivalent graphs are allowed to have external edges. To any graph is associated a polynomial on $H^*(M,\mathfrak{g})$ integrating a differential form on the space of the space of configurations of its vertices.
%This is defined by integrating a differential form on the geometric blowup of the configuration space of the vertices of the graph. This differential form is construct using the propagator.

The technical part of \cite{AS} was devoted to the study of the physical propagator and the related analysis of the finiteness of the theory. Axelrod and Singer were able to prove that the kernel of the physical propagator
% is defined as the Hodge inverse of the differential and it is proved that it 
defines a smooth differential form on $C_2(M)$ (the blowup of $M^2$ over the diagonal) providing a geometric description of the singularity of the kernel along the diagonal.  
We avoid these technical issues using a geometric approach that generalize the approach of Kontsevich (see also \cite{BC}). Instead of studying the physical propagator we define the propagator directly as a differential form on $C_2(M)$ which satisfies some conditions that are defined in terms of some geometric data. The data include a metric on $M$, a connection compatible with the metric, and a vector subspace of $\Omega^*(M,\mathfrak{g})$ representing $H^*(M,\mathfrak{g})$. We prove that two different choices of such data lead to solutions of the Master equation that are Master homotopic.
% to the idea of \cite{Co} in order to extend this result in the non acyclic case. We construct the full solution of the master equation and we prove that the constant term is affected by the same anomaly found in \cite{AS}. As in the acyclic case it is not necessary to renormalize the theory and it is possible to construct the solution directly globally (in contrast with \cite{Co}). 

%In \cite{BC1}, \cite{BC2} the propagator was constructed choosing a metric and a connection compatible with it. The propagator of \cite{AS} is reconstructed by taking the Levi-Civita connection. In our case we also choose a section of the cohomology. This fix the propagator up to an exact form.
% The propagator of \cite{Co} is the particular case when one takes the harmonic representative of the cohomology.    

We extend the analysis to the study of link invariants. To a link in $M$ is associated an observable of the BV-formalism. We prove that the observables associated to equivalent links are homotopic up to an anomaly term. For framed links, the anomaly can be removed modifying the observable. 

We also study how the observable changes if the link self-intersects. In a neighborhood of the intersection points of the link it is necessary to consider a new kind of compactification of the configuration of the points. It turns out that the jump of the observable is tied to the Chas-Sullivan string product of the family of the links.

During the preparation of this note, we have become aware of independent work by Cattaneo and Mnev \cite{CM} on the same topic.

{\bf Acknowledgements. } We are grateful to K. Costello for helpful discussions and to C. Rossi for his comments.

\section{Quantum Master Equation}

In this section we recall some basic definition related to the (finite dimensional) Batalin-Vilkovski formalism. For more details see (\cite{Co}). 

Fix a super vector space $H$ with an odd symplectic form. Denote by $\mathcal{O}(H)$ the algebra of polynomial functions on $H$.

Let $x_i$, $y_i$ be Darboux coordinates for $H$ with $x_i$ even and $y_i$ odd. Let $\Delta$ be the order two differential operator on $H$ defined by  
$$ \Delta = \partial_{x_i} \partial_{y_i} . $$
The operator $\Delta $ is independent of the choice of basis of $H$.

The bracket on the algebra $\mathcal{O}(H)$ is defined by
$$ \{  f,g  \} = \Delta (fg) - \Delta (f ) g - (-1)^{|f|} f \Delta (g). $$

%Consider a Lagrangian $L$ and a function $H$.
%The fundamental result of the BV formalism states that $\int_L H$ is independent by $L$ if $\Delta H =0$.
%If $H= e^{S/{\hbar}}$, $\Delta H =0$ is equivalent to

Denote by $\mathcal{O}(H)[\hbar]$ the polynomial functions with coefficients in the formal parameter $\hbar$. An even element $S \in \mathcal{O}(H)[\hbar]$ satisfies the quantum master equation if 
$$  \Delta e^{S/\hbar}=0 . $$
This equation can be rewritten as
\begin{equation} \label{masterequation}
\frac{1}{2} \{ S, S \} + \hbar \Delta S =0 .
\end{equation}

We will need to consider also the one parameter family version of the above construction.
Consider the space $\Omega^*([0,1])  \otimes \mathcal{O}(H) [\hbar]$. Extend the operator $\Delta$ to this space acting trivially on $\Omega^*([0,1]) $.
A master homotopy is an even element $ \tilde S \in  \Omega^*([0,1]) \otimes  \mathcal{O}(H) [\hbar] $ such that 
\begin{equation} \label{masterhomotopy}
d \tilde S + \frac{1}{2} \{ \tilde S,\tilde S \} + \hbar \Delta \tilde S =0 . 
\end{equation}
Write $\tilde S$ as $ \tilde S = A(t) + B(t) dt$ where $A(t)$ and $B(t)$ are elements of $\mathcal{O}(H) [\hbar]$. Equation (\ref{masterhomotopy}) becomes
$$ \frac{1}{2} \{ A(t), A(t) \} + \hbar \Delta A(t) =0     $$
$$  \dot A(t) +  \{ B(t), A(t) \} + \hbar \Delta B(t) =0   .  $$

In the case we are interested in $H$ is the cohomology of a fixed flat connection 
$$H = H^*(M, \mathfrak{g})[1] .$$ 
The odd symplectic form is induced by the pairing
$$\langle \alpha \otimes X,  \alpha' \otimes X' \rangle = (-1)^{|\alpha|} \int_M \alpha \wedge \alpha'  \langle  X,  X' \rangle_{\mathfrak{g}} $$

\section{Effective action}

Let $C_n(M)$ denote the configuration space of $n$ points in $M$. The boundary of $C_2(M)$ is isomorphic to the $2$-sphere bundle $S(TM)$ of $TM$. 
%It can be seen as the blowup of the diagonal of $M \times M$ and therefore can be identified with the boundary of $C_2(M)$.
We will often consider the differential forms on $M \times M$ as subspace of the differential forms on $C_2(M)$.
Also, the differential forms on $C_2(M)$ can be considered as differential forms on $M \times M$ with particular type of singularity along the diagonal.

In this section we construct a version of the propagators of \cite{AS} and \cite{Co} as a differential form on $C_2(M)$. 
%The propagator is defined by some properties that are fixed using the following data 
We need to fix the following data:
\begin{itemize}
\item a metric on $M$

\item a connection on $TM$ compatible with the metric 

\item a vector space $\Psi \subset \Omega^*(M,\mathfrak{g})$ of closed forms such that the natural projection
$$ \Psi \rightarrow   H^*(M, \mathfrak{g})[1]$$
is an isomorphism.
%$$\Psi : H (M,\mathfrak{g}) \rightarrow \Omega^*(M,\mathfrak{g}).$$ 
\end{itemize}
%The space $\Psi $ can be therefore identified with $H^*(M, \mathfrak{g})[1]$.
%$\Psi$ is a linear map such that for any $ \alpha \in H (M,F)$, $\Psi(\alpha)$ is a representative of the the cohomology class $\alpha$.

Let $x_i$, $y_i$ be Darboux coordinates for $H^*(M, \mathfrak{g})[1]$ and let $\alpha_i $, $\beta_i$ be the associated basis of $\Psi$.
%and $x_i$ be the correspondents coordinates on $H^*(M, \mathfrak{g})[1]$. 
Define $ \psi \in \mathcal{O}(H) \otimes  \Omega^*(M,\mathfrak{g}) $
using 
\begin{equation} \label{psi}
\psi = \sum_i x_i \alpha_i + y_i  \beta_i .
\end{equation}
%$\Psi$ defines a degree one polynomial on $H$ with coefficients in $\Omega^*(M,\mathfrak{g})$

%Let $\beta_i \in \Psi$ be the dual basis of $\alpha_i$, that is $\langle \alpha_i ,  \beta_j  \rangle = \delta_{ij}$ for every $i,j$. 

Define $K \in \Omega^3(M^2,\pi_1^*\mathfrak{g} \otimes \pi_2^*\mathfrak{g} ) $ as 
\begin{equation} \label{kappa}
K =  \sum_i \alpha_i \otimes \beta_i + \beta_i \otimes \alpha_i .
\end{equation}

The differential forms $\psi$ and $K$ do not depend on the Darboux coordinates we used.
%where $I_H$ is the identity in $H (M,\mathfrak{g})$.

\subsection{Propagator}
Fix a local orthogonal frame of $TM$. The bundle $S(TM)$ is a trivial bundle with fiber $ S^2 $. 
Denote by $\theta_i$ the $1$-form components of the connection in this local system.  
Define the differential form 
\begin{equation} \label{singularity}
 \eta = \frac{\omega + d(\theta^i x_i)}{ 4 \pi}
\end{equation}
where $\omega$ is the standard volume form of $S^2$ and $x_i$ are the restriction to $S^2$ of the standard coordinates of $\R^3$. The form (\ref{singularity}) is independent of the choice of the local frame of $TM$. Therefore the differential form $ \eta$  is defined globally on $ \Omega^2(S(TM))$.

%In the acyclic it is possible to add to $\tilde P$ a smooth form getting a closed propagator $P$. This fix the propagator up to a smooth exact form.

%In the non acyclic case in order to fix the propagator $P$ we will add to our data (the metric and the connection) a section of the cohomology 
%$$\Psi : H (M,\mathfrak{g}) \rightarrow \Omega^*(M,\mathfrak{g}).$$ 

Denote by $\pi_{\partial} : \partial C_2(M) \rightarrow M $ the natural projection. Let $I_{\mathfrak{g}} \in \pi_1^*(\mathfrak{g}) \otimes \pi_2^*(\mathfrak{g})$ be the tensor dual of the pairing on $\mathfrak{g}$. Let $r :\Omega^2(C_2(M), \pi_1^*(\mathfrak{g}) \otimes \pi_2^*(\mathfrak{g})) \rightarrow \Omega^2(C_2(M), \pi_1^*(\mathfrak{g}) \otimes \pi_2^*(\mathfrak{g})) $ be the map induced by the reflection map $(x,y) \rightarrow (y,x)$ on $M^2$.
\begin{lemma} \label{propagator}
There exists a differential form $P \in \Omega^2(C_2(M), \pi_1^*(\mathfrak{g}) \otimes \pi_2^*(\mathfrak{g}))$ such that
\begin{equation} \label{singularity2}
i_{\partial}^*  P =  \eta \otimes I_{\mathfrak{g}} + \pi_{\partial}^* (\phi)
\end{equation}
for some $\phi \in  \Omega^2( M , \pi_1^*(\mathfrak{g}) \otimes \pi_2^*(\mathfrak{g}))$,
\begin{equation} \label{differential}
d P = K
\end{equation}
\begin{equation} \label{parity}
r^*  P = - P
\end{equation}  
and
\begin{equation} \label{contraction}
\langle P, \alpha_1 \otimes \alpha_2 \rangle =0
\end{equation}
for each $ \alpha_1, \alpha_2 \in \Psi $. 
 
%(\ref{singularity}) and (\ref{parity}) hold.

Moreover $P$ is unique up addiction of the differential of a form in $\Omega^1(C_2(M), \pi_1^*(\mathfrak{g}) \otimes \pi_2^*(\mathfrak{g}))$ with pull-back on $\partial C_2(M)$ in $\pi_{\partial}^* (\Omega^1( M , \pi_1^*(\mathfrak{g}) \otimes \pi_2^*(\mathfrak{g}))$.
\end{lemma}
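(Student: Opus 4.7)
The plan is to construct $P$ by successive adjustment, imposing the four conditions (\ref{singularity2})--(\ref{contraction}) one at a time, and then to deduce uniqueness from a cohomological argument. First, fix a collar neighborhood $U \cong \partial C_2(M) \times [0,\varepsilon)$ of $\partial C_2(M)$ together with a cutoff function supported in $U$ and equal to $1$ near the boundary. Using this cutoff, extend $\eta \otimes I_{\mathfrak{g}}$ (viewed on $\partial C_2(M)$ via the collar projection) to a form $P_0 \in \Omega^2(C_2(M),\pi_1^*\mathfrak{g}\otimes\pi_2^*\mathfrak{g})$ with $i_\partial^* P_0 = \eta \otimes I_\mathfrak{g}$. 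This achieves (\ref{singularity2}) but gives no control on $dP_0$.

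Next I would correct $P_0$ so that $dP = K$. The form $K - dP_0$ is closed, and (up to sign) its cohomology class on $M^2$ is the Poincaré dual of the diagonal with coefficients in $\pi_1^*\mathfrak{g} \otimes \pi_2^*\mathfrak{g}$. Since $C_2(M)$ deformation retracts onto $M^2 \setminus \Delta$, this class vanishes in $H^3(C_2(M),\pi_1^*\mathfrak{g}\otimes\pi_2^*\mathfrak{g})$, so a primitive $R$ exists. Using the Leray--Hirsch decomposition of the sphere bundle $\partial C_2(M) \to M$, in which $\eta$ represents the fibre class, $R$ can be further modified by a closed form so that $i_\partial^* R$ lies in $\pi_\partial^*\Omega^*(M,\pi_1^*\mathfrak{g}\otimes\pi_2^*\mathfrak{g})$. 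Setting $P_1 := P_0 + R$ delivers both (\ref{singularity2}) and (\ref{differential}).

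Antisymmetry is then imposed by replacing $P_1$ with $\tfrac12(P_1 - r^*P_1)$. The signs are compatible: the antipodal map on $S^2$ reverses orientation, so $r^*\eta = -\eta$, and the matching behavior of $K$ under the Koszul swap combined with the symmetry of the $\mathfrak{g}$-pairing ensures the previous conditions survive. Finally, to enforce (\ref{contraction}), I would subtract a closed $r$-antisymmetric form $Z$ lying in the finite-dimensional subspace $\Psi \otimes \Psi \subset \Omega^*(M^2,\pi_1^*\mathfrak{g}\otimes\pi_2^*\mathfrak{g})$, with coefficients chosen so that $\langle P_1 - Z,\alpha_1\otimes\alpha_2\rangle = 0$ for all $\alpha_1,\alpha_2 \in \Psi$. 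Solvability comes from the non-degeneracy of the Poincaré pairing on $\Psi$, which is inherited from the pairing on $H^*(M,\mathfrak{g})[1]$; and since $Z$ is closed and smooth on $M^2$, subtracting it preserves (\ref{singularity2})--(\ref{parity}).

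For uniqueness, if $P$ and $P'$ both satisfy the four conditions, then $\Delta := P - P'$ is closed, $r$-antisymmetric, orthogonal to $\Psi \otimes \Psi$, and restricts to a pullback from $M$ on $\partial C_2(M)$. The plan is to show that such a $\Delta$ represents the zero class in the appropriate ``basic'' subquotient of $H^2(C_2(M),\pi_1^*\mathfrak{g}\otimes\pi_2^*\mathfrak{g})$: the potential non-trivial contributions come from representatives in $\Psi \otimes \Psi$, and these are killed by the orthogonality condition. One then concludes $\Delta = dY$ with $Y \in \Omega^1(C_2(M),\pi_1^*\mathfrak{g}\otimes\pi_2^*\mathfrak{g})$ whose boundary restriction lies in $\pi_\partial^*\Omega^1(M,\pi_1^*\mathfrak{g}\otimes\pi_2^*\mathfrak{g})$. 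The main obstacle will be this last step, which requires combining the long exact sequence of the pair $(C_2(M),\partial C_2(M))$ with the Leray--Hirsch structure of the sphere-bundle boundary, and using a Hodge-type decomposition to tie the cohomology class of $\Delta$ to a representative in $\Psi \otimes \Psi$ that is then annihilated by (\ref{contraction}).
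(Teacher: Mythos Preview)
Your approach is essentially the same as the paper's: build a preliminary propagator from a cutoff of $\eta \otimes I_{\mathfrak{g}}$, correct to achieve $dP=K$, then impose (\ref{contraction}) and (\ref{parity}). The one substantive difference is in how the primitive for $K - dP_0$ is found. The paper shows that $[dP_0] = [K]$ already in $H^3(M \times M)$, not merely in $H^3(C_2(M))$, by pairing $dP_0$ against closed test forms $\tau$ and applying Stokes on $C_2(M)$: the boundary term picks up exactly $\int_\Delta \tau$, which identifies $[dP_0]$ with the Poincar\'e dual of the diagonal, i.e.\ with $[K]$. Consequently the correcting form $\alpha$ can be taken smooth on $M \times M$, and since any form on $M \times M$ restricts to a basic form on $\partial C_2(M)=S(TM)$, condition (\ref{singularity2}) follows immediately with $\phi = i_\Delta^* \alpha$ --- no Leray--Hirsch argument is needed. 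Your route only produces a primitive $R$ on $C_2(M)$, and then the step ``modify $R$ by a closed form so that $i_\partial^* R$ is basic'' is underspecified: note that $i_\partial^* R$ is not closed (its differential equals the basic form $\pi_\partial^* i_\Delta^* K$), so the cohomological Leray--Hirsch decomposition does not apply directly, and one must argue separately that the fibre integral $(\pi_{\partial})_*(i_\partial^* R)$ can be absorbed. This can be done, but the paper's observation that one may work on $M^2$ throughout bypasses it entirely.

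For uniqueness the paper likewise exploits $M\times M$: it compares the long exact sequences of the pairs $(C_2(M), S(TM))$ and $(M \times M, \Delta)$ via the isomorphism $H^*(C_2(M), S(TM)) \cong H^*(M \times M, \Delta)$, reducing the ambiguity of $P'-P$ to a closed form on $M\times M$ whose class is then killed by (\ref{contraction}). Your sketch with the long exact sequence plus Leray--Hirsch is the same idea at slightly lower resolution.
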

\begin{proof}
Let $U$ be a small tubular neighborhood of the diagonal $\Delta$ of $M \times M$. There is natural induced map $\pi_U : U \rightarrow S(TM) $. Let $\rho$ be a cutoff function equal to one in a neighborhood of $\partial C_2(M)$ and zero outside a compact subset of $U$. 
If $U$ is small enough we can use the parallel transport along the radii in order to identify the fiber of the bundle $\mathfrak{g}$. Using this trivialization we can extend $I_{\mathfrak{g}}$ to a parallel section $I_{\mathfrak{g}} \in \Omega^0(U, \pi_1^*(\mathfrak{g}) \otimes \pi_2^*(\mathfrak{g}))$. 
Using this identification we can define preliminarily $ P$ as 
$$ P = \rho (\pi_U^* \eta) \otimes I_{\mathfrak{g}}.$$ 
Equation (\ref{singularity2}) holds for $\phi = 0$:
\begin{equation} \label{singularity2pr}
i_{\partial}^*  P =  \eta \otimes I_{\mathfrak{g}} + \pi_{\partial}^* (\phi)
\end{equation} 

In the following we will omit in the notation the coefficient bundle. All the differential forms and cohomology groups have coefficients in the bundle $\pi_1^*(\mathfrak{g}) \otimes \pi_2^*(\mathfrak{g})$

The differential form $ P$ is closed in a neighborhood of $S(TM)$, therefore we can consider $d P$ as a closed form on $\Omega^2(M \times M)$. For any closed differential form $\tau \in \Omega^3(M \times M)$, integrating by parts we have
$$ \int_{M^2} (d P) \wedge \tau =  \int_{C_2(M)} (d P) \wedge \tau = \int_{S(TM)} P \wedge i^*_{\Delta} \tau = \int_{\Delta} \tau$$
where in the last equality we have applied (\ref{singularity2pr}).
It follows that $dP$ and $K$ are in the same cohomology class in $\Omega^3(M \times M)$. Therefore there exists a differential form $\alpha \in \Omega^2(M \times M)$ such that 
$$ K = dP + d \alpha . $$

Replace $P$ with $P + \alpha$. Equation (\ref{differential}) holds. Now equation (\ref{singularity2}) holds with $\phi = i^*_{\Delta} \alpha$. In the same way we can add to $P$ a closed form of $\Omega^2(M^2)$ such that also (\ref{contraction}) holds. 
%Of course these operations will not change (\ref{singularity2}) and can be done keeping (\ref{parity}). 

$P$ will also satisfy (\ref{parity}) if we choose the cut off function $\rho$ such that $T^* \rho = \rho$ and the differential forms that we add to $P$ are antisymmetric.

Now suppose that $P'$ is another element of $\Omega^2(C_2(M))$ such that (\ref{singularity2}), (\ref{differential}), (\ref{contraction}) and (\ref{parity}) hold. Let $\phi'$ be the corresponding form in (\ref{singularity2}). Consider the following commutative diagram
$$
\xymatrix{ \ar[r] & H^2(C_2(M),S) \ar[r] & H^2(C_2(M))  \ar[r] & H^2(S)    \ar[r] & H^3(C_2(M),S) \ar[r] & \\
           \ar[r] & H^2(M \times M, \Delta) \ar[r] \ar[u]^{\sim} & H^2(M \times M) \ar[r] \ar[u] & H^2(\Delta) \ar[r] \ar[u] &  H^3(M \times M, \Delta) \ar[u]^{\sim} \ar[r] & }
$$
where the rows are exact sequences.
$P'-P$ defines an element of $H^2(C_2(M))$ and $\phi' - \phi$ defines an element of $H^2(\Delta)$. These two elements have the same image on $H^2(S)$. From the commutativity of the diagram it follows that $\phi' - \phi$ is mapped to zero on $H^3(M \times M , \Delta)$ and therefore there exists $\alpha \in \Omega^2(M \times M) $ such that 
$$ i^*_{\Delta} \alpha  = \phi' -\phi .$$ 
The differential form $P' - P - \alpha $ defines an element of $H^2(C_2(M),S) $. Since $H^2(C_2(M),S)  \cong H^2(M \times M, \Delta) $ there exist $\beta \in \Omega^2( M \times M)$ and $\varphi \in \Omega^2(C_2(M))$ such that
$$ P' - P - \alpha = \beta + d \varphi$$
with $i^*_{S} \varphi =0 $.
%for some $\beta \in \Omega^2( M \times M, \Delta, \pi_1^*(\mathfrak{g}) \otimes \pi_2^*(\mathfrak{g})$ and $\varphi \in \Omega^2(C_2(M), S,\pi_1^*(\mathfrak{g}) \otimes \pi_2^*(\mathfrak{g}))$. 
Property (\ref{contraction}) applied to $P'-P$ implies that $\alpha + \beta $ is cohomologicaly trivial on $\Omega^2(M \times M)$.

\end{proof}

\subsection{Effective Action}

Let $\gamma$ be a trivalent graph that can have external edges. We allow edges starting and ending at the same vertex. Denote by $V(\gamma)$ and $E(\gamma)$ the sets of vertices and edges of $\gamma$. 
%Let $H(\gamma)$ be the set of edges with an orientation. 
%For any $v \in E(\gamma)$ let $H(v)$ be the set of edges starting from $v$.

For $v \in V(\gamma)$ let $\pi_v : C_{V(\gamma)}(M) \rightarrow M $ be the projection on the point $v$ and define
$$ \mathfrak{g}_v = \pi_v^* (\mathfrak{g}) . $$
For $e \in E(\gamma) $ let $\pi_e$ be the projection on the vertices attached to $e$. We have $\pi_e : C_{V(\gamma)} \rightarrow C_2(M) $ if $e$ is an internal edge connecting two different vertices and $\pi_e : C_{V(\gamma)} \rightarrow M $ if $e$ is external edge or an edge starting and ending on the same vertex. 
%Analogously for any external edge $e \in E^{in}(\gamma) $ let $\pi_e : C_{V(\gamma)} \rightarrow M $ is the projection on the vertex attached to $e$. 

As in \cite{AS}, in order to make the signs simpler it is useful to introduce the super-propagator $P_s$ as the image of $P$ by the inclusion
$$ \pi_1^*(\mathfrak{g}) \otimes \pi_2^*(\mathfrak{g})   \rightarrow \bigwedge(\pi_1^*(\mathfrak{g}) \oplus \pi_2^*(\mathfrak{g})) . $$
Property $(\ref{parity})$ for $P$ implies 
$$ r^* (P_s) =P_s .$$

Define the bundle $\mathfrak{g}_{V(\gamma)}$ over $C_{V(\gamma)}(M)$ by
$$ \mathfrak{g}_{V(\gamma)} = \bigwedge (\bigoplus_{v \in V(\gamma)} \mathfrak{g}_v) .$$

To the graph $\gamma$ is associated the differential form $\omega_{\gamma} \in  \Omega^*(C_{V(\gamma)}(M)) \otimes  \mathfrak{g}_{V(\gamma)}$
defined by
\begin{equation} \label{graph}
%\omega_{\gamma}  = \otimes_{v \in V(\gamma)} \text{Tr}_v^{\mathfrak{g}}(\wedge_{e \in E^{ex}(\gamma)}  \pi_e^*( \psi) \wedge_{e \in E^{in}(\gamma)} \pi^*_e  P) 
\omega_{\gamma}  = \bigwedge_{e \in E^{in}(\gamma)} \pi^*_e  P_s.
\end{equation}
In the formula (\ref{graph}), if $e$ is an edge starting and ending at the same vertex we define $\pi^*_e  P_s = \pi^*_e  \phi_s $.

For any vertex $v \in V(\gamma)$ define
%$$ \text{Tr}_v^{\mathfrak{g}} : (\mathfrak{g})^{\wedge H(v)} \rightarrow \C $$
$$ \text{Tr}_v :   \mathfrak{g}_{V(\gamma)} \rightarrow   \mathfrak{g}_{V(\gamma)}$$
as follows.
Let $X_i \in \mathfrak{g}_v $ for $1 \leq i \leq k$ and $\tilde{X} \in \mathfrak{g}_{V(\gamma)} $ without components in $\mathfrak{g}_v$.
Then
$$  \tr_v(X_1 \wedge X_2 \wedge .... \wedge X_k \wedge \tilde{X})= 0$$
if $k \neq 3$ and 
$$  \tr_v(X_1 \wedge X_2 \wedge X_3 \wedge \tilde{X})= \langle  X_1,[X_2,X_3]  \rangle \tilde{X}$$
if $k=3$. 

The composition of the $\{ \text{Tr}_v \}_{v \in V(\gamma)}$ defines 
$$  \text{Tr}_{V(\gamma)} = \otimes_{v \in V(\gamma)} \text{Tr}_v : \mathfrak{g}_{V(\gamma)} \rightarrow \C. $$

The effective action $S$ is defined by
\begin{equation} \label{action}
S =   \sum_{\gamma} \frac{1}{\text{Aut}(\gamma)} {\hbar}^{l(\gamma)}  \int_{C_{V(\gamma)}(M)} \text{Tr}_{V(\gamma)} (\omega_{\gamma} \wedge \bigwedge_{e \in E^{ex}(\gamma)}  \pi_e^*( \psi)) .
\end{equation}
where $l(\gamma)$ is the number of loops of the graph $\gamma$. Observe that in order to fix the sign of $\tr_{V(\gamma)}$ and the orientation of $C_{V(\gamma)}(M)$ it is necessary to order the vertices of $\gamma$ up to even perturbations. Since these two signs cancel, definition (\ref{action}) works without ambiguity. 
%$$\omega_{(i,k)} (\alpha) = \sum_{\gamma} \frac{1}{\text{Aut}(\gamma)} \omega_{\gamma} (\alpha) $$
%$$A_{\gamma} (\alpha) = \int_{C_{V(\gamma)}(M)} \omega_{\gamma} (\alpha) $$

\begin{theorem} 
$S$ satisfies the master equation (\ref{masterequation}). 

If $S_0$ and $S_1$ are solutions associated to two different sets of data there exists $\tilde{S} \in \Omega^*([0,1]) \otimes \mathcal{O}(H^*(M)) $ such that $\tilde{S}|_0=S_0 $, $\tilde{S}|_1=S_1 $ and
\begin{equation} \label{homotopyformula}
d \tilde S + \frac{1}{2} \{ \tilde S , \tilde S \} + \hbar \Delta \tilde S =  \beta(\hbar) \int_M p(\tilde \theta).
\end{equation}
In equation (\ref{homotopyformula}), $ \beta(\hbar)$ is a formal series in $\hbar$ which is independent of $M$ and $p(\tilde{\theta})$ is the Pontryagin class of the connection $\tilde{\theta}$ on $T(M \times I)$.
\end{theorem}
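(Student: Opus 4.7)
The plan is to derive both assertions from a Stokes-type identity on the Fulton--MacPherson compactifications $C_{V(\gamma)}(M)$, in the spirit of Kontsevich, extended to the non-acyclic BV setting. For each trivalent graph $\gamma$ in the expansion of $S$ I consider the integrand $\text{Tr}_{V(\gamma)}\bigl(\omega_\gamma \wedge \bigwedge_{e \in E^{ex}(\gamma)}\pi_e^*\psi\bigr)$ on $C_{V(\gamma)}(M)$. Since the elements of $\Psi$ are closed, $d$ acts only on $\omega_\gamma$, and by (\ref{differential}) each internal edge $e$ contributes $\pi_e^* K = \pi_e^*\sum_i(\alpha_i\otimes\beta_i + \beta_i\otimes\alpha_i)$. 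Combinatorially this is the same as cutting $e$ and replacing it by two new external legs paired by the symplectic form on $H$. Summing over all graphs and edges, the contributions reorganize into $\tfrac{1}{2}\{S,S\} + \hbar\Delta S$: cutting a separating internal edge produces the product of two smaller graph integrals (the bracket), while cutting a non-separating edge reduces the loop number by one (the BV Laplacian).

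By Stokes' theorem the integral of this total derivative over $C_{V(\gamma)}(M)$ equals the integral of $\omega_\gamma \wedge \bigwedge_e \pi_e^*\psi$ over $\partial C_{V(\gamma)}(M)$, so the master equation reduces to showing that the total boundary contribution vanishes after summing over graphs. Principal boundary strata correspond to pairs of vertices colliding; by (\ref{singularity2}) the restriction of $\omega_\gamma$ factors into a product of $\eta$'s on the $S^2$ fiber (for edges between the colliding pair) and a smooth pullback (for the rest). Summing over the three trivalent ways the colliding pair attaches to the remainder of $\gamma$ and applying the Jacobi identity on $\mathfrak{g}$ yields pairwise IHX-type cancellation. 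Hidden faces with three or more colliding vertices vanish by the standard dimension count on $C_k(\R^3)$. The contraction condition (\ref{contraction}) is crucial to discard the ``spurious'' terms in which a $K$ produced by $dP$ would contract directly against two external $\psi$-legs, rather than contributing to the bracket or Laplacian part.

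For the homotopy statement I interpolate the two sets of data over $t \in [0,1]$, producing a family of propagators that assembles into a form $\tilde P \in \Omega^2(C_2(M)\times[0,1])$ satisfying a version of Lemma \ref{propagator} relative to the base. Defining $\tilde S$ by the same Feynman expansion using $\tilde P$ and $\tilde\psi$, the Stokes argument is now applied on $C_{V(\gamma)}(M)\times[0,1]$. The algebraic side acquires the extra term coming from the $[0,1]$-differential, so it reads $d\tilde S + \tfrac{1}{2}\{\tilde S,\tilde S\} + \hbar\Delta\tilde S$. The principal-boundary cancellations of the first part still kill the pure-$t$ pieces fiberwise, but the mixed $dt$-pieces do not vanish: the $t$-derivative of $\eta$ along the $S^2$ fiber is the transgression form for the Euler class of $TM$, whose $S^2$-fiber integral is the Chern--Simons $3$-form of $\tilde\theta$. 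Summing over the graphs for which this residue survives collapses, after the algebraic reductions, to the universal $\Theta$-graph contribution as in \cite{AS}, producing the anomaly $\beta(\hbar)\int_M p(\tilde\theta)$ with $\beta(\hbar)$ determined by the combinatorial weight of that universal diagram (hence independent of $M$).

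The main obstacle, as in \cite{AS} and \cite{Ko}, is the careful combinatorial bookkeeping needed for the principal-boundary cancellations: one must combine the Jacobi identity, the symmetry $r^*P_s = P_s$, the contraction property (\ref{contraction}), and the parity signs coming from the trace maps to show that every non-anomalous face cancels in pairs. A secondary technical step is identifying the fiber integral of the transgression of $\eta$ with the Pontryagin--Chern--Simons density exactly, in order to pin down the coefficient $\beta(\hbar)$ and to confirm that it is a universal formal series in $\hbar$ depending neither on $M$ nor on the chosen data.
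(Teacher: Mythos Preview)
Your overall strategy matches the paper's: apply Stokes on $C_{V(\gamma)}(M)\times I$, identify the bulk differential with $\tfrac12\{\tilde S,\tilde S\}+\hbar\Delta\tilde S$ via $d\tilde P=\tilde K$ and the separating/non-separating dichotomy, cancel principal faces by the Jacobi identity, and extract the anomaly from the remaining hidden faces. Two points in your sketch, however, are not correct as written.

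First, the anomaly does not reduce to ``the universal $\Theta$-graph contribution''. The paper's key Lemma~\ref{face} shows that for \emph{every} trivalent subgraph $\delta$ with more than two vertices and no external edges, the fiber integral $c_\delta$ is a (generically nonzero) constant multiple of $p(\tilde\theta)$. Each such closed $\delta$ contributes at loop order $l(\delta)$ to the coefficient of $\hbar^{l(\delta)}$ in $\beta(\hbar)$; the $\Theta$-graph gives only the leading term. This is why $\beta(\hbar)$ is a genuine formal series rather than a single number.

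Second, ``hidden faces with three or more colliding vertices vanish by the standard dimension count on $C_k(\R^3)$'' is an oversimplification that fails here. Because the propagator restricts to $\partial C_2(M)$ as $\eta\otimes I_{\mathfrak g}+\pi_\partial^*\phi$ rather than just $\eta$, the restriction of $\omega_\gamma$ to a collapse face is a sum over subsets $S\subset E^{in}(\delta)$ in which edges in $S$ carry $\pi_e^*\tilde\phi$ and the rest carry $\tilde\eta$. The paper's Lemma~\ref{face} analyzes each term by arguing that its $\tilde\eta$-part must be an invariant polynomial in $\theta$, $d\theta$; this forces the residual form degree on $M\times I$ to be $0$ or $4$. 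The degree-$4$ case gives the Pontryagin term (the anomaly), while in the degree-$0$ case Kontsevich's vanishing lemma applies after one cuts the $S$-edges, reducing to the explicit two-vertex list. Your ``dimension count'' covers neither the $\phi$-decomposition nor the invariant-polynomial step, and in particular does not explain why subgraphs with external edges give zero while closed ones give $p(\tilde\theta)$.

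A smaller remark: the role you assign to condition~(\ref{contraction}) is not quite right. That condition is $\langle P,\alpha_1\otimes\alpha_2\rangle=0$, a constraint on $P$, and it is used in Lemma~\ref{dcontraction} to make the family version of the propagator exist with the correct normalization; it does not enter the Stokes argument as a mechanism to kill ``spurious'' $K$-against-$\psi$ contractions.
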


Formula (\ref{homotopyformula}) is proved in Proposition \ref{homotopy}. The first part of the theorem follows from (\ref{homotopyformula}) applied to a constant family of data. 

Formula (\ref{homotopyformula}) is the master homotopy equation up to the anomaly of \cite{AS}. In order to find an actual homotopy we need to fix an orthonormal frame of $TM$ in order to modify the effective action.
Denote by $CS (\theta)$ the gravitational Chern-Simons invariant of the connection associated this frame (cf. \cite{AS}, \cite{BC}). 
This is defined by
$$\text{CS}(\theta) = \int_M ( \theta^i d \theta_i -\frac{1}{3} \epsilon_{ijk}  \theta^i  \theta^j  \theta^k ) $$ 
where $ \theta_i$ are the components of the connection in the frame.

%From Proposition \ref{homotopy} follows that 
%$$ \tilde S - \beta(\hbar) \text{CS}(\tilde \theta)  $$
%is a master homotopy (\ref{masterhomotopy}).
\begin{corollary}
For two different sets of data $S - \beta(\hbar) CS(\theta) $ are master homotopic. 
\end{corollary}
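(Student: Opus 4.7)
The plan is to cancel the anomaly on the right-hand side of (\ref{homotopyformula}) by subtracting a scalar primitive, namely the gravitational Chern-Simons invariant pulled along the homotopy. The argument has three ingredients: interpolating the data, invoking Chern-Simons transgression, and exploiting the centrality of scalar functions of $t$ in the BV algebra.

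First I would join the two sets of data $D_0,D_1$ (with frames $f_0,f_1$) by a $1$-parameter family $D_t$. The metric, metric-compatible connection, and representing subspace $\Psi$ admit such an interpolation (convex for metrics, affine for compatible connections, and a generic path for $\Psi$ since the isomorphisms $\Psi\rightarrow H^*(M,\mathfrak{g})[1]$ form an open subset of a Grassmannian). For the orthonormal frames I would extend $f_0,f_1$ to a global orthonormal frame $\tilde f$ of $T(M\times I)$; since $M\times I$ is parallelizable and the extra $dt$ direction provides room for stabilization, such an extension exists. Let $\tilde\theta$ be the associated connection on $T(M\times I)$. The theorem then supplies $\tilde S\in\Omega^*([0,1])\otimes\mathcal{O}(H)[\hbar]$ with $\tilde S|_{t=i}=S_i$ satisfying (\ref{homotopyformula}).

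The crux is the transgression identity. The Chern-Simons $3$-form $CS(\tilde\theta)\in\Omega^3(M\times I)$ obeys $d\,CS(\tilde\theta)=p(\tilde\theta)$ and restricts on each slice $M\times\{t\}$ to the scalar $CS(\theta_t)$. Since $M$ is closed, fiberwise integration along $M$ commutes with the exterior derivative, yielding
$$\int_M p(\tilde\theta)\;=\;d\!\left(\int_M CS(\tilde\theta)\right)\;=\;d_{[0,1]}\bigl(CS(\theta_t)\bigr),$$
an exact $1$-form on $[0,1]$. (Only the purely $M$-tangential part of $CS(\tilde\theta)$ survives the fiber integration, since a $2$-form on the $3$-manifold $M$ integrates to zero.) Now define
$$\tilde S'\;:=\;\tilde S\;-\;\beta(\hbar)\,CS(\theta_t).$$
Because $CS(\theta_t)$ is a $t$-dependent scalar, it is central for $\{-,-\}$ and annihilated by $\Delta$, so $\{\tilde S',\tilde S'\}=\{\tilde S,\tilde S\}$ and $\Delta\tilde S'=\Delta\tilde S$. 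Plugging in and using (\ref{homotopyformula}) together with the transgression,
$$d\tilde S'+\tfrac{1}{2}\{\tilde S',\tilde S'\}+\hbar\,\Delta\tilde S'\;=\;\beta(\hbar)\!\int_M p(\tilde\theta)\;-\;\beta(\hbar)\,d\,CS(\theta_t)\;=\;0.$$
Thus $\tilde S'$ is an honest master homotopy from $S_0-\beta(\hbar)\,CS(\theta_0)$ to $S_1-\beta(\hbar)\,CS(\theta_1)$.

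The delicate point is the frame extension: two framings of $TM$ may lie in different components of the space of framings of the $3$-manifold $M$, but they become connected once one allows frames of the $4$-manifold $T(M\times I)$ that twist through the $dt$ direction, and it is precisely this flexibility that the gravitational Chern-Simons counterterm is designed to absorb. The remainder is a formal manipulation powered by $d\,CS=p$ and the fact that $t$-dependent scalars are invisible to both $\{-,-\}$ and $\Delta$.
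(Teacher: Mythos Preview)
Your argument is correct and follows the same route as the paper: build the extended Chern--Simons functional $\text{CS}(\tilde\theta)$ along the family, use the transgression identity $d\,\text{CS}(\tilde\theta)=\int_M p(\tilde\theta)$, and subtract $\beta(\hbar)\,\text{CS}(\tilde\theta)$ from $\tilde S$ so that the anomaly in (\ref{homotopyformula}) is cancelled. The paper's proof is just a two-line sketch of exactly this; you have supplied the details it omits, in particular the explicit verification that a $t$-dependent scalar is central for the bracket and killed by $\Delta$, and the discussion of why the two endpoint frames of $TM$ can be joined through a frame of $T(M\times I)$. One small notational point: your symbol $CS(\theta_t)$ is really shorthand for the fiber integral $\int_M \text{CS}(\tilde\theta)$, which for intermediate $t$ need not literally be the Chern--Simons invariant of a frame of $TM$ (since $\tilde f$ may twist through $\partial_t$); only the endpoint values and the transgression identity matter, and those you have right.
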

\begin{proof}
Given a one parameter family of connections and an orthonormal frame of $T(M \times [0,1])$ we can define the extended gravitational Chern-Simons functional as
$$\text{CS}(\tilde \theta) = \int_M ( \tilde \theta^i d \tilde \theta_i -\frac{1}{3} \epsilon_{ijk} \tilde \theta^i \tilde \theta^j \tilde \theta^k ) $$ 
where $\tilde \theta_i$ are the components of the connection in the frame.
As in \cite{AS} we have 
$$ d \text{CS}(\tilde \theta) = \int_M p(\tilde \theta) .$$
The corollory follows from Formula (\ref{homotopyformula}).
\end{proof}

\section{Invariance}
\subsection{Extended propagator}

In this section we extend the construction of the propagator to a family of data. We have a smooth family of data parametrized by the interval $I= [0,1]$, that is 

\begin{itemize}
\item a family of metrics 
\item a family of compatible connections 
%\item[D3] a family of vector spaces $\Psi_t \subset \Omega^*(M,\mathfrak{g})$.
\item a family of vector spaces $\Psi_t \subset \Omega^*(M)$.
\end{itemize}

The family metrics and connection define a metric and a compatible connection on $M \times I$ respectively.

%Let $\alpha_0(t) \in \Psi_t$. 
For $\alpha \in H^*(M,\mathfrak{g} )$, denote by $\alpha_0(t)$ the element in $\Psi_t$ representing the class $\alpha$. 
There exists $\alpha_1(t)  \in \Omega^*(M,\mathfrak{g})$ such that 
$$\frac{d}{dt} \alpha_0(t) = - d \alpha_1(t) $$ 
%for some $\beta_0(t) \in  \in \Omega^*(L)$. 
%We have $\frac{d}{dt} \alpha_t = - d \beta_t $ for some . 
and 
$$\langle \alpha_1(t) , \Psi_t \rangle =0 .$$
%This conditions determine uniquely $\alpha_1(t)$.
Denote $\tilde \alpha = \alpha_0 (t) + \alpha_1 (t)dt$. This defines a linear map $ \tilde \Psi : H^*(M,\mathfrak{g} ) \rightarrow \Omega^*(M \times I,\mathfrak{g})$.
%$\tilde \Psi : H^*(L) \rightarrow \Omega^*(L \times I)$.
 
%such that the following holds for each $\tilde \alpha \in \tilde \Psi $. If $\tilde \alpha =
%\alpha_0 (t) + \alpha_1 (t)dt$ with 
%$\alpha_0 (t), \alpha_1 (t) \in \Omega^*(M,\mathfrak{g}) $, then
%$\alpha_0 (t), \alpha_1 (t) \in \Omega^*(L) $, 
%\begin{itemize}
%\item $\alpha_0(t) \in \Psi_t$
%\item $\langle \alpha_1(t) , \Psi_t \rangle =0$
%\end{itemize}
%for all $t \in [0,1]$.

Let $S(TM \times I)$ be the unit sphere bundle of $TM \times I \rightarrow M \times I$. 
In analogy with formula (\ref{singularity}), define the differential form $ \tilde \eta \in \Omega^2(S(TM \times I))$ locally as
$$ \tilde{\eta} = \frac{\omega + d(\tilde{\theta}^i x_i)}{ 4 \pi} $$
using a local orthonormal frame of $T(M \times I)$. 

Let $( \alpha_i, \beta_i)$ be a Darboux basis of $H^*(M, \mathfrak{g})[1]$, and let $(\tilde \alpha_i, \tilde \beta_i)$ be the associated elements in $\Omega^*(M \times I,\mathfrak{g})$ through $\tilde \Psi$.
%a basis of $\tilde \Psi$ associated to a Darboux basis of $H^*(M, \mathfrak{g})[1]$. Define $\tilde \psi$ as
%in (\ref{psi}) and $\tilde K$ as in (\ref{kappa}).
Define
\begin{equation} \label{fpsi}
\tilde \psi = \sum_i x_i \tilde \alpha_i +  y_i \tilde \beta_i .
\end{equation}
%Define $K \in \Omega^3(M^2,\pi_1^*\mathfrak{g} \otimes \pi_2^*\mathfrak{g} ) $
and
\begin{equation} \label{fkappa}
\tilde K =  \sum_i \tilde \alpha_i \otimes \tilde \beta_i + \tilde \beta_i \otimes \tilde \alpha_i .
\end{equation}

\begin{lemma} \label{fpropagator}
There exists a differential form 
$$\tilde{P} = P_0(t) + P_1(t) dt \in \Omega^2(C_2(M) \times I, \pi_1^*(\mathfrak{g}) \otimes \pi_2^*(\mathfrak{g}))$$ 
such that
\begin{equation} \label{fsingularity}
i_{\partial}^* \tilde  P = \tilde \eta \otimes I_{\mathfrak{g}} + \pi_{\partial}^* (\tilde \phi)
\end{equation}
for some $\tilde \phi \in \Omega^2(M \times I, \pi_1^*(\mathfrak{g}) \otimes \pi_2^*(\mathfrak{g}))$ 
\begin{equation} \label{fdifferential}
d \tilde P = \tilde K 
\end{equation}
\begin{equation} \label{fcontraction}
\langle P_0(t), \alpha_0(t) \otimes \beta_0(t) \rangle =0.
\end{equation}
for any $\tilde \alpha, \tilde \beta \in \tilde \Psi $ and $t \in I$, and $T^* \tilde P = - \tilde P$ hold.
%\begin{equation} \label{fparity}
%T^* \tilde P = - \tilde P
%\end{equation}

Moreover $\tilde P$ is unique up to the addition of the differential of a form in $\Omega^1(C_2(M) \times I, \pi_1^*(\mathfrak{g}) \otimes \pi_2^*(\mathfrak{g}))$ with pull-back on $\partial C_2(M) \times I$ in
$\pi_{\partial}^* (\Omega^1( M \times I, \pi_1^*(\mathfrak{g}) \otimes \pi_2^*(\mathfrak{g})))$.
\end{lemma}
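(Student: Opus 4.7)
The plan is to follow the proof of Lemma \ref{propagator} verbatim, now on $M^2 \times I$ in place of $M^2$, and keep track of the decomposition $\tilde P = P_0(t) + P_1(t)\,dt$. Existence proceeds in three steps (preliminary definition, correction for (\ref{fdifferential}), correction for (\ref{fcontraction})); uniqueness is a diagram chase as before.

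For the preliminary definition I would pick a tubular neighborhood $\tilde U$ of $\Delta \times I \subset M^2 \times I$, a $T$-symmetric cutoff $\tilde\rho$ equal to one near $\partial C_2(M) \times I$ and compactly supported in $\tilde U$, and trivialize $\pi_1^*\mathfrak{g} \otimes \pi_2^*\mathfrak{g}$ on $\tilde U$ by parallel transport along the radial geodesics of the family of metrics. With $\pi_{\tilde U}:\tilde U \to S(TM \times I)$ the associated retraction, set $\tilde P := \tilde\rho\,(\pi_{\tilde U}^*\tilde\eta) \otimes I_{\mathfrak{g}}$; this satisfies (\ref{fsingularity}) with $\tilde\phi = 0$ and $T^*\tilde P = -\tilde P$. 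To arrange (\ref{fdifferential}), $d\tilde P$ extends to a smooth closed form on $M^2 \times I$; pairing with a closed test form $\tilde\tau \in \Omega^3(M^2 \times I)$ and integrating by parts using (\ref{fsingularity}) yields $\int_{\Delta \times I}\tilde\tau$, which matches $\int_{M^2 \times I} \tilde K \wedge \tilde\tau$ because $\tilde K$ is a Poincaré dual of $\Delta \times I$ (the $dt$-components of the $\tilde\alpha_i, \tilde\beta_i$ from $\tilde\Psi$ correctly account for the transverse direction). Hence $\tilde K = d\tilde P + d\tilde\alpha$ for some $\tilde\alpha \in \Omega^2(M^2 \times I)$, and replacing $\tilde P$ by $\tilde P + \tilde\alpha$ enforces (\ref{fdifferential}).

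To enforce (\ref{fcontraction}), observe that the condition only constrains the $dt$-free component $P_0(t)$. I would add to $\tilde P$ a closed form $\tilde\gamma = \gamma_0(t) + \gamma_1(t)\,dt \in \Omega^2(M^2 \times I)$ whose slice $\gamma_0(t)$ cancels the finite collection of pairings $\langle P_0(t), \alpha_{0,i}(t) \otimes \beta_{0,j}(t)\rangle$. Pick a smooth family of closed 2-forms $\gamma_0(t)$ realizing the required pairings (there is plenty of room in the infinite-dimensional space of closed 2-forms on $M^2$), then solve the cocycle equation $d_{M^2}\gamma_1 = -\partial_t\gamma_0$ for $\gamma_1(t)$, possibly after shifting $\gamma_0(t)$ by an exact form so that $\partial_t\gamma_0$ is exact. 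Antisymmetrizing $\tilde\gamma$ preserves $T^*\tilde P = -\tilde P$, and closedness of $\tilde\gamma$ preserves (\ref{fdifferential}). Uniqueness then reduces, verbatim as in Lemma \ref{propagator}, to a chase in the commutative diagram of long exact sequences for the pairs $(C_2(M) \times I, S(TM) \times I)$ and $(M^2 \times I, \Delta \times I)$; excision remains an isomorphism after crossing with $I$.

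The main obstacle lies in this last correction step: (\ref{fcontraction}) is pointwise in $t$ while (\ref{fdifferential}) is global on $M^2 \times I$, so the corrections must assemble into a closed form on the product, forcing the solvability of $d_{M^2}\gamma_1 = -\partial_t\gamma_0$. This is the only genuinely new complication over the static case; once it is handled the rest of the argument is mechanical.
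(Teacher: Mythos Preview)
Your first two steps (preliminary definition, correction for (\ref{fdifferential})) and the uniqueness argument match the paper's approach. The gap is in the third step, enforcing (\ref{fcontraction}).

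You correctly isolate the obstacle: the correction $\tilde\gamma = \gamma_0(t)+\gamma_1(t)\,dt$ must be \emph{closed} on $M^2\times I$, so after choosing $\gamma_0(t)$ to kill the pairings you must solve $d_{M^2}\gamma_1 = -\partial_t\gamma_0$. But your proposed fix, ``shifting $\gamma_0(t)$ by an exact form so that $\partial_t\gamma_0$ is exact'', cannot work: adding an exact form (even a $t$-dependent one) to $\gamma_0(t)$ changes $\partial_t\gamma_0$ by an exact form and leaves its cohomology class untouched. The class $[\gamma_0(t)]\in H^2(M^2)$ is \emph{forced} by the numbers $\langle P_0(t),\alpha_{0,i}(t)\otimes\beta_{0,j}(t)\rangle$ you are trying to cancel (these pairings depend only on cohomology classes); if those numbers varied with $t$, so would $[\gamma_0(t)]$, and the equation for $\gamma_1$ would be unsolvable. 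You have flagged the right issue but not resolved it.

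The paper handles this not by constructing $\gamma_1$ but by showing the obstruction vanishes: a separate lemma (Lemma~\ref{dcontraction}) proves, using only (\ref{fsingularity}) and (\ref{fdifferential}), that
\[
\frac{d}{dt}\langle P_0(t),\alpha_0(t)\otimes\beta_0(t)\rangle = 0.
\]
The computation writes $\dot P_0 = K_1 - dP_1$ and $\dot\alpha_0 = -d\alpha_1$, integrates by parts, and checks that each resulting term is zero (including the boundary term from $S(TM)$, which vanishes because the fiber push-forward of $\tilde\eta$ has no $dt$-component). Once the pairings are known to be constant in $t$, one simply adds a $t$-independent closed form on $M^2$ correcting them at $t=0$, and (\ref{fcontraction}) then holds for every $t$. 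So the ``genuinely new complication'' you flag is resolved by a direct calculation showing there is in fact no obstruction, not by solving a PDE in the $t$-direction.
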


\begin{proof}

Using the same argument of Lemma \ref{propagator} we can construct a differential form $\tilde P \in \Omega^2(C_2(M) \times I, \pi_1^*(\mathfrak{g}) \otimes \pi_2^*(\mathfrak{g}))$ such that (\ref{fsingularity}) and (\ref{fdifferential}) holds for some $\tilde \phi \in  \Omega^2( M \times I , \pi_1^*(\mathfrak{g}) \otimes \pi_2^*(\mathfrak{g}))$. The condition (\ref{fcontraction}) can be imposed using the Lemma \ref{dcontraction}.

\end{proof}
%In order to extend (\ref{contraction}) we need the following.
\begin{lemma} \label{dcontraction}
%Write
%$$ \tilde P = P_0(t) + P_1(t) dt .$$
%For any $\tilde \alpha, \tilde \beta \in \tilde \Psi $ the following happens.
%Write $\tilde \alpha = \alpha_0 (t) + \alpha_1 (t)dt$ and $\tilde \beta = \beta_0 (t) + \beta_1 (t)dt$, then

With the same notation of Lemma \ref{fpropagator} the following holds
$$ \frac{d}{dt}\langle P_0(t), \alpha_0(t) \otimes \beta_0(t) \rangle =0. $$
\end{lemma}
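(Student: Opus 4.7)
The plan is to apply Stokes' theorem on $C_2(M) \times [0,t]$ to the product form $\tilde{P} \wedge \tilde{\alpha} \otimes \tilde{\beta}$ and show that both the bulk and the side-boundary contributions vanish; this forces the function $t \mapsto \langle P_0(t), \alpha_0(t) \otimes \beta_0(t) \rangle$ to be constant, hence to have zero derivative.

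The key preliminary observation is that $\tilde{\alpha}$ and $\tilde{\beta}$ are closed forms on $M \times I$: expanding $d(\alpha_0(t) + \alpha_1(t)\,dt)$ and using $d_M \alpha_0 = 0$ together with $\partial_t \alpha_0 = -d_M \alpha_1$ yields $d\tilde{\alpha} = 0$, and similarly $d\tilde{\beta} = 0$. Combined with $d\tilde{P} = \tilde{K}$ from \eqref{fdifferential}, this gives $d(\tilde{P} \wedge \tilde{\alpha} \otimes \tilde{\beta}) = \tilde{K} \wedge \tilde{\alpha} \otimes \tilde{\beta}$, so Stokes on $C_2(M) \times [0,t]$ expresses the difference $\langle P_0(t), \alpha_0(t) \otimes \beta_0(t) \rangle - \langle P_0(0), \alpha_0(0) \otimes \beta_0(0) \rangle$ as the sum of a bulk integral of $\tilde{K} \wedge \tilde{\alpha} \otimes \tilde{\beta}$ over $C_2(M) \times [0,t]$ and a side integral of $i_\partial^* \tilde{P} \wedge \tilde{\alpha} \otimes \tilde{\beta}$ over $\partial C_2(M) \times [0,t] = S(TM) \times [0,t]$.

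For the bulk integral I would expand $\tilde{K} = \sum_i \tilde{\alpha}_i \otimes \tilde{\beta}_i + \tilde{\beta}_i \otimes \tilde{\alpha}_i$ and factor each summand through Fubini on $M \times M$ into products of the shape $\bigl(\int_M \tilde{\alpha}_i \wedge \tilde{\alpha}\bigr)\bigl(\int_M \tilde{\beta}_i \wedge \tilde{\beta}\bigr)$, with appropriate $\mathfrak{g}$-contractions. The defining orthogonality $\langle \alpha_1(t), \Psi_t \rangle = 0$ (and its analogue for $\beta_1$) forces each such $M$-integral, viewed as a form on $I$, to have no $dt$-component: the potential $dt$-coefficient is a pairing of $\alpha_1$- or $\beta_1$-type elements against elements of $\Psi_t$, hence zero. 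Consequently the whole bulk integrand has no $dt$-component and its integral over $[0,t]$ vanishes.

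For the side integral I would use \eqref{fsingularity} to write $i_\partial^* \tilde{P} = \tilde{\eta} \otimes I_\mathfrak{g} + \pi_\partial^* \tilde{\phi}$, noting that $\tilde{\alpha} \otimes \tilde{\beta}$ restricts to $\pi_\partial^*(\tilde{\alpha} \wedge \tilde{\beta})$ on $\partial C_2(M) \times I$. The $dt$-free summand $\alpha_0 \wedge \beta_0$ of $\tilde{\alpha} \wedge \tilde{\beta}$ is a form of degree $|\alpha|+|\beta|=4$ on three-dimensional $M$, hence identically zero; only the $dt$-part of $\tilde\alpha \wedge \tilde\beta$ survives, and so only the $dt$-free part $\eta_0 \otimes I_\mathfrak{g} + \pi_\partial^* \phi_0$ of $i_\partial^* \tilde{P}$ can contribute. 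The $\pi_\partial^* \phi_0$ piece is a pullback of a $5$-form on three-dimensional $M$ and so vanishes; the $\eta_0 \otimes I_\mathfrak{g}$ piece, after fiber integration using $\int_{S^2}\eta_0 = 1$, reduces to the scalar pairings $\int_M \langle \alpha_1, \beta_0 \rangle$ and $\int_M \langle \alpha_0, \beta_1 \rangle$, both zero because $\alpha_1, \beta_1 \perp \Psi$. The main obstacle will be the bookkeeping of signs, degrees and orientations throughout the Stokes computation, but since every individual contribution vanishes the final identity is sign-insensitive.
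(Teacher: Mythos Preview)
Your argument is correct. It is, however, organized differently from the paper's proof. The paper differentiates directly: it applies the product rule to $\langle P_0(t),\alpha_0(t)\otimes\beta_0(t)\rangle$, obtaining three terms, and then kills each one separately by an integration by parts on $C_2(M)$ at fixed $t$ (using $\dot\alpha_0=-d\alpha_1$, $\dot P_0=K_1-dP_1$, the explicit form of $K_0,K_1$, the boundary condition \eqref{fsingularity}, and the orthogonality $\alpha_1\perp\Psi_t$). Your approach is the integrated version of this: Stokes on $C_2(M)\times[0,t]$ packages the three product-rule terms into a single bulk-plus-side decomposition, and your vanishing arguments for the bulk (Fubini plus $\alpha_1,\beta_1\perp\Psi_t$) and for the side boundary (degree counting on $M$ plus the fiberwise normalization $\int_{S^2}\eta_0=1$) correspond exactly to the paper's term-by-term cancellations. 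Both routes use the same ingredients; yours has the advantage that, as you note, all contributions vanish individually so the sign bookkeeping never matters, while the paper's version makes the role of each identity (e.g.\ $\langle K_1,\alpha_0\otimes\beta_0\rangle=0$) more visibly separate.
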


\begin{proof}
Write $\tilde{K}= K_0 + K_1 dt$. Equation (\ref{fdifferential}) can is equivalent to $d P_0 = K_0$ and $ \dot{P}_0 + d P_1 = K_1$. We have  
$$ \frac{d}{dt}\langle P_0(t), \alpha_0(t) \otimes \beta_0(t) \rangle = \langle \dot{P}_0(t), \alpha_0(t) \otimes \beta_0(t) \rangle + \langle P_0(t), \dot{\alpha}_0(t) \otimes \beta_0(t) \rangle + \langle P_0(t), \alpha_0(t) \otimes \dot{\beta}_0(t) \rangle .$$
We now prove that each term in the left hand side is zero.

Since $d \alpha_0 = -d \alpha_1$, integrating by parts we have
$$ \langle P_0(t), \dot{\alpha}_0(t) \otimes \beta_0(t) \rangle = - \langle P_0(t), d \alpha_1(t) \otimes \beta_0(t) \rangle = \langle d P_0(t), \alpha_1(t) \otimes \beta_0(t) \rangle + \langle \alpha_1(t) , \beta_0(t) \rangle =0.$$
%The last term comes from the boundary term and it is zero. 
In the same way we can prove that $\langle P_0(t), \alpha_0(t) \otimes \dot{\beta}_0(t) \rangle =0$

Since $ \dot{P}_0 = K_1 - d P_1 $ in order to prove that $ \langle \dot{P}_0(t), \alpha_0(t) \otimes \beta_0(t) \rangle =0 $ it is enough to prove
$$ \langle K_1 , \alpha_0(t) \otimes \beta_0(t) \rangle =0 $$
$$ \langle  d P_1(t), \alpha_0(t) \otimes \beta_0(t) \rangle =0 .$$
The first is immediate using the definition of $K_1$. The second follows again by integration by part, where now the boundary term is zero because the push-forward on the diagonal $\Delta$ of $\tilde{\eta}$ has no component in $d t$.

\end{proof}

%From (\ref{dcontraction}) it follows that we can apply the same argument of Lemma \ref{propagator} and add to $\tilde P$ a closed differential form on $M^2 \times I$ in such a way that

\subsection{Master Homotopy}

Using the extended propagator $\tilde P$ we can extend formula (\ref{graph}) to a one parameter family of data. For any graph $\gamma$ let
\begin{equation} \label{fgraph}
%\omega_{\gamma}  = \otimes_{v \in V(\gamma)} \text{Tr}_v^{\mathfrak{g}}(\wedge_{e \in E^{ex}(\gamma)}  \pi_e^*( \psi) \wedge_{e \in E^{in}(\gamma)} \pi^*_e  P) 
\tilde \omega_{\gamma}  = \bigwedge_{e \in E^{in}(\gamma)} \pi^*_e \tilde  P_s \in \Omega^*(C_{V(\gamma)}(M) \times I,   \mathfrak{g}_{V(\gamma)} ).
\end{equation}
%where $\tilde \omega_{\gamma}$ is a differential form in $\Omega^*(C_{V(\gamma)}(M) \times I,   \mathfrak{g}_{V(\gamma)} )$.

Define the extended effective action $\tilde S \in \Omega^*(I) \otimes \mathcal{O}(H)$ using
\begin{equation} \label{faction}
\tilde S =   \sum_{\gamma} \frac{1}{\text{Aut}(\gamma)} {\hbar}^{l(\gamma)}  \int_{C_{V(\gamma)}(M)} \text{Tr}_{V(\gamma)} (\tilde \omega_{\gamma} \wedge \bigwedge_{e \in E^{ex}(\gamma)}  \pi_e^*( \tilde \psi)) .
\end{equation}
%In equation (\ref{action}), we define $\tilde S \in \Omega^*(I)$ 
where now we consider the integrals as push forward on the interval $I$.

%$$ S =   \sum_{\gamma} \frac{1}{\text{Aut}(\gamma)} {\hbar}^{b(\gamma)}  \sigma_* \text{Tr}_{V(\gamma)} \omega_{\gamma} $$
%where $\sigma$ is the projection to $I$.

%The following lemma is the key point of the analysis (see \cite{BC1}).
\begin{lemma} \label{face}
Let $\delta$ be a trivalent graph with $k$ external edges. Define
$$\mathcal{S}_{\delta} = \sqcup_{(p,t) \in M \times I} C_{V(\delta)}(T_pM)/ \sim $$ 
where $\sim$ is the equivalence by homotheties and translations.
$\mathcal{S}_{\delta}$ can be identified with the subset of $C_{V(\delta)}(M) \times I$ where all the vertices are collapsed on a point and there is natural projection $\pi_{\delta} :\mathcal{S}_{\delta} \rightarrow M \times I$ that is a fiber bundle. 
%at a point $(p,t) \in M \times I$ given by $C_{V(\delta)}(T_pM)$ modulo dilatations and translations. 

%Define the form $\tilde \omega_{\delta} \in \Omega^*(M \times I,(\mathfrak{g}^*)^k)$ by
%$$\phi(s)= (p_{\delta})_* (\otimes_{v \in V(\gamma)} \text{Tr}_v( s \wedge_{e \in E(\gamma)} \pi^*_e \eta)).$$
%$$\tilde \omega_{\delta}  =  (\pi_{\delta})_* \tr_{V(\delta)} \bigwedge_{e \in E^{in}(\gamma)} \pi^*_e  P. $$

%Define $c_{\delta} \in  \Omega^*(M \times I) \otimes  \mathfrak{g}_{V(\delta)}$ by
Define
$$c_{\delta} = (\pi_{\delta})_*  \tilde \omega_{\delta} \in  \Omega^*(M \times I) \otimes  \mathfrak{g}_{V(\delta)} .$$
%Observe that the push forward makes sense since the bundle $\mathfrak{g}_{V(\delta)}$ is trivial along the fibers of $\mathcal{S}_{\delta}$.

Then, if $\delta$ has more than two vertices $c_{\delta}$ is zero unless $k=0$. In this case $c_{\delta} $ is a multiple of the Pontryagin class $p(\tilde \theta)$. 

If $\delta$ has two vertices $1$ and $2$ we have the following cases. Let $n_i$ and $l_i$ be the number of external edges and closed edges attached to $i$. Let $m$ be the number of edges connecting $1$ and $2$.

\begin{itemize}

\item $n_1=n_2=l_1=l_2=0 $ and $m=3$. Then $c_{\delta} = p(\tilde \theta) + 3 \tilde \phi_{12} \wedge \tilde \phi_{12}  \wedge I_\mathfrak{g}$.

\item $n_1=n_2=0$, $l_1=l_2=1$ and $m=1$. Then $c_{\delta} = 2 \tilde \phi_1 \wedge \tilde \phi_{2}  \wedge I_\mathfrak{g}$.  

\item $n_1=n_2=1$, $l_1=l_2=0$ and $m=2$. Then $c_{\delta} = 2 \tilde \phi_{12} \wedge I_\mathfrak{g}$.

\item  $n_1=2$, $n_2=0$, $l_1=0$, $l_2=1$ and $m=1$. Then $c_{\delta} = \tilde \phi_{2} \wedge I_\mathfrak{g}$.

\item $n_1=n_2=2$, $l_1=l_2=0$ and $m=1$. Then $c_{\delta} =  I_\mathfrak{g}$.

\end{itemize}
Where we consider $\tilde \phi_{i}$ with coefficients in the bundle $\wedge^2(\mathfrak{g}_i)$.

%$c_{\delta} $ is a polynomial in the forms $\tilde \phi$. The degree zero in $\phi$ is zero unless  
%\begin{itemize}
%\item $k=0$. In this case $c_{\delta} $ is a multiple of the Pointryagin class $p(\theta)$. 
%\item $k=4$ and $\delta$ has exactly two vertices and one internal edge connecting its. 
%\end{itemize}

%The degree one in $\phi$ is zero unless
%\begin{itemize}
%\item $k=2$ and $\delta$ has exactly two vertices and two internal edges connecting its. 
%\end{itemize}

%The degree two in $\phi$ is zero unless
%\begin{itemize}
%\item $k=0$ and $\delta$ has exactly two vertices.
%\end{itemize}

\end{lemma}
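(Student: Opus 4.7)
The plan is to work locally on the boundary stratum $\mathcal{S}_\delta$ using the explicit boundary expression $i_\partial^*\tilde P = \tilde\eta\otimes I_\mathfrak{g} + \pi_\partial^*(\tilde\phi)$ from Lemma~\ref{fpropagator}. The fiber of $\pi_\delta$ over $(p,t)\in M\times I$ is $C_{V(\delta)}(T_pM)/\sim$, of dimension $3n-4$ where $n=|V(\delta)|$. For each internal edge $e=(u,v)$ between distinct vertices the restricted propagator splits as $\tilde\eta_e\otimes I_\mathfrak{g} + \pi^*\tilde\phi$: the first summand depends only on the direction in $S^2$ from $u$ to $v$, while the second is pulled back from $M\times I$. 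Self-loops contribute only the $\pi^*\tilde\phi$ piece by convention. I would expand $\tilde\omega_\delta$ as a sum over subsets $E_\eta\subseteq E^{\mathrm{in}}(\delta)$ of edges that take the $\tilde\eta$-type contribution; the $\pi^*\tilde\phi$ factors then pull out of the fiber integration.

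For $|V(\delta)|>2$, a degree count gives $\deg c_\delta = 4-k$. When $k>0$, I would invoke a Kontsevich/Axelrod--Singer-type vanishing argument on the fiber $C_n(\R^3)/\sim$: the combination of the antisymmetry $r^*P=-P$ and the trivalent incidence structure at each collapsing vertex forces the relevant fiber integrand to be odd under a suitable involution of the configuration, so its integral vanishes. When $k=0$, only wedging the $d(\tilde\theta^i x_i)$ pieces of the $\tilde\eta_e$'s can produce a form of degree $4$ on $M\times I$; the fiber integration then yields a characteristic form built universally from the connection, which by degree and naturality must be a multiple of the Pontryagin class $p(\tilde\theta)$, exactly as in the anomaly computation of \cite{AS}.

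For $|V(\delta)|=2$ the fiber is $S^2$ (the direction from vertex $1$ to vertex $2$ modulo scalings), and each of the five cases reduces to a direct multinomial expansion. I would expand $(\tilde\eta\otimes I_\mathfrak{g} + \pi^*\tilde\phi_{12})^{\wedge m}$ and wedge with $\pi^*\tilde\phi_i$ for any self-loop vertex $i$, using $\int_{S^2}\omega/(4\pi)=1$, $\omega\wedge\omega=0$ on $S^2$, and in Case~1 the identity $\int_{S^2}(\omega + d(\tilde\theta^i x_i))^{\wedge 3}/(4\pi)^3 = p(\tilde\theta)$ from \cite{AS}. The integer coefficients $3,2,2,1,1$ arise from counting how many of the $m$ edges carry the single $\tilde\eta$-factor that produces the fiber volume form $\omega$. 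The main obstacle is the hidden-face vanishing in the $|V(\delta)|>2$, $k>0$ case: isolating a single involution of the configuration fiber that, combined with the parity of $P$, kills every placement of $\tilde\eta$'s at once will require a careful combinatorial argument using the trivalence structure and the constraint imposed by the external edges.
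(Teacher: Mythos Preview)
Your overall architecture---expand $\tilde\omega_\delta$ over subsets of internal edges carrying the $\tilde\eta$ versus $\pi^*\tilde\phi$ factor, pull the $\tilde\phi$'s out of the fiber integral, then compute the two-vertex cases directly on $S^2$---matches the paper exactly, and your treatment of the five two-vertex cases is fine.

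The divergence is in the step you flag as the ``main obstacle'' (the $|V(\delta)|>2$, $k>0$ case), and here the paper takes a shorter route that dissolves the difficulty. Rather than looking for an involution that kills the fiber integral for every $k>0$, the paper observes that each coefficient $c_\delta^S$ (the fiber integral of the pure-$\tilde\eta$ product over the edges \emph{not} in $S$) is, by construction, an invariant polynomial in the connection $\tilde\theta$ and its curvature; its degree on $M\times I$ is $4-k'$, where $k'$ is the number of external edges of the graph $\delta'$ obtained from $\delta$ by cutting the edges in $S$ into pairs of external edges. Since invariant polynomials of an $SO(3)$-connection exist only in degrees $0$ and $4$, one is forced to have $k'\in\{0,4\}$. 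The case $k'=0$ forces $S=\emptyset$ and $k=0$, giving the Pontryagin anomaly; the case $k'=4$ means $c_\delta^S$ has degree $0$, so the push-forward picks out only the connection-independent part of the $\tilde\eta$'s, and the \emph{standard} Kontsevich vanishing theorem for $C_n(\R^3)/\!\sim$ applies to $\delta'$, yielding that $\delta'$ (hence $\delta$) has exactly two vertices. Thus you never need a new involution argument for general $k$: the invariant-polynomial degree count reduces everything to the classical $k'=4$ situation, and the ``cutting'' trick lets you treat all subsets $S$ uniformly. Your proposal is not wrong, but the route you outline for $k>0$ is unnecessarily hard and you have not actually produced the required involution; replacing it with the degree/naturality argument closes the gap immediately.
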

\begin{proof}
%Consider a boundary face $\mathcal{S}$ of $\partial C_{V(\gamma)}(M) \times I$. $i_{\mathcal{S}}^* \omega_{\gamma} (\alpha)$ can be factorize as a product $\lambda_1 \pi^* \lambda_2 $. 

We can write $\tilde \omega_{\delta}$ as  
$$ \tilde \omega_{\delta} = \sum_S  \bigwedge_{e \in E^{in}(\delta) \setminus S} \tilde \eta \wedge \bigwedge_{e \in S} \pi_e^* (\tilde \phi) $$
where the sum is done on all the subsets $S$ of $E^{in}(\delta)$.
%of terms polynomial in (the pull backs of) the forms $\tilde \phi$. 
Since the differential forms $\pi_e^*(\tilde \phi)$ descend to the differential forms on the base $M \times I$
%$\Omega^*(M \times I) \otimes  \mathfrak{g}_{V(\delta)}$ also $c_{\delta}$ is a polynomial in the forms $\tilde \phi$.
we can write $c_{\delta}$ as 
$$ \tilde c_{\delta} = \sum_S c_{\delta}^S \wedge \bigwedge_{e \in S} \pi_e^* (\tilde \phi). $$

Consider first the coefficient $c_{\delta}^0$ of the contribution of the empty set $S= \emptyset$. $ c_{\delta}^0$ is a differential form of degree $4-k$ with coefficients in the flat bundle $\mathfrak{g}_{V(\delta)}$.
$c_{\delta}^0$ has to be an invariant polynomial in $\theta$ and $d \theta$. Therefore $k=0$ or $4$. 

If $k=0$, $ c_{\delta}^0$ is a $4$ differential form on $M \times I$ that is proportional to the Pontryagin class.

If $k=4$, $ c_{\delta}^0$ is a zero differential form and therefore the push forward selects the part of degree zero in $\theta$. Hence we can apply the vanishing theorem of Kontsevich (see \cite{Ko}, \cite{BC}). This implies that $\delta$ has only two vertices connected exactly by an internal edge.
%the number of vertices of $\delta$ is $2$ . and therefore $ \tilde \omega_{\delta}$ is zero by the Jacobi identity. 

Consider now the term $c_{\delta}^S$ for $S \neq \emptyset$.
%Any term $\tilde \phi$ is associated with an internal edge of $\delta$. Consider an addendum 
%Now consider the coefficient of some products of the $\delta \phi$. Any factor  
Consider the graph $\delta'$ obtained by "cutting" the edges in $S$, that is replace all the edges of $S$ with two external edges. The previous argument applied to $\delta'$ implies that if $c_{\delta}^S \neq 0$ then $\delta'$ is the graph composed by two vertices connected by an internal edge and having four external edges. The result follows.

\end{proof}

\begin{proposition} \label{homotopy}
%Let $\sigma$ be the porjection on the interval $I$. 
$\tilde{S}$ is a solution of the homotopy equation with anomaly (\ref{homotopyformula}).
%\begin{equation} \label{homotopyformula}
%d \tilde S + \frac{1}{2} \{ \tilde S , \tilde S \} + \hbar \Delta \tilde S =  \beta(\hbar) \int_M p(\tilde \theta).
%\end{equation}
%Here $p(\tilde{\theta})$ is the Pontryagin class of the connection $\tilde{\theta}$ on $T(M \times I)$ and $\beta(\hbar)$ is a formal series in $\hbar$ that does not depend from the manifold $M$.
\end{proposition}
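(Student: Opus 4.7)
The strategy is to compute $d\tilde S$ by Stokes' theorem on each configuration-space integral, splitting the result into bulk contributions (from the exterior derivative of the integrand) and boundary contributions (from collapse strata of $\overline C_{V(\gamma)}(M)$), and matching the outcome against the right-hand side of the homotopy master equation plus the Pontryagin anomaly. A preliminary observation simplifies things: writing $\tilde\alpha_i = \alpha_{i,0}(t)+\alpha_{i,1}(t)\,dt$, the defining relation $\dot\alpha_{i,0}=-d\alpha_{i,1}$ gives $d\tilde\alpha_i=0$, so each $\tilde\psi$-factor on an external edge is closed on $M\times I$, and the only source of interior exterior derivative is the propagator identity $d\tilde P_s=\tilde K_s$ from (\ref{fdifferential}).

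The bulk term is therefore, for each graph $\gamma$ and each internal edge $e$, the integral with $\tilde P_s$ replaced by $\tilde K_s = \sum_i \tilde\alpha_i\otimes\tilde\beta_i+\tilde\beta_i\otimes\tilde\alpha_i$ along $e$. Combinatorially this is ``cutting'' edge $e$ and closing off each stub via the symplectic pairing. Partitioning the sum according to whether $e$ is a bridge or a loop edge, one recognizes the standard BV diagrammatics: bridges reassemble graphs of the form $\gamma_1\sqcup_e\gamma_2$ whose generating function is $\tfrac12\{\tilde S,\tilde S\}$, while loop edges reduce $l(\gamma)$ by one and produce a pair of new external edges on the same graph, whose contribution is $\hbar\Delta\tilde S$. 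The signs work out via the antisymmetry $r^*\tilde P_s=\tilde P_s$ and the fixed ordering of vertices, yielding a bulk contribution of the form $-\tfrac12\{\tilde S,\tilde S\}-\hbar\Delta\tilde S$ plus leftover pieces to be cancelled by the boundary.

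For the boundary, I decompose $\partial(\overline C_{V(\gamma)}(M)\times I) = \bigsqcup_A \partial_A\overline C_{V(\gamma)}(M)\times I$ where $A\subseteq V(\gamma)$ with $|A|\ge 2$ is the collision subset. On a collapse stratum, the configuration space fibers over $M\times I$ with fiber $\mathcal{S}_\delta$ (here $\delta$ is the subgraph of $\gamma$ induced on $A$), so by Fubini the boundary integral factors as $\int_{M\times I}c_\delta\wedge(\text{amplitude of }\gamma\setminus A)$ with $c_\delta=(\pi_\delta)_*\tilde\omega_\delta$ as in Lemma~\ref{face}. For $|A|=2$ the five cases enumerated in Lemma~\ref{face} produce exactly the $\tilde\phi$-terms arising from the boundary condition (\ref{fsingularity}) for $\tilde P$; these combine with the leftover bulk terms and cancel via the Jacobi identity applied at the collapsed trivalent vertex, as in \cite{AS} and \cite{BC}. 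For $|A|\ge 3$, Lemma~\ref{face} forces $k=0$ and $c_\delta$ to be a scalar multiple of $p(\tilde\theta)$; summing over all such closed trivalent configurations produces a universal formal series $\beta(\hbar)$ independent of $M$ and of the geometric data, giving the anomaly $\beta(\hbar)\int_M p(\tilde\theta)$.

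The main obstacle is the two-vertex matching: one must verify that each of the five cases listed in Lemma~\ref{face} for $|A|=2$, together with the corresponding $\tilde K$-insertion bulk terms, cancel once the vertex-ordering signs, the edge antisymmetry $r^*\tilde P_s=\tilde P_s$, and the Jacobi identity at the three-valent collapsed vertex are accounted for. Once this cancellation is established, everything else is bookkeeping: the bulk gives $-\tfrac12\{\tilde S,\tilde S\}-\hbar\Delta\tilde S$, the $|A|\ge 3$ boundaries give $\beta(\hbar)\int_M p(\tilde\theta)$, and universality of $\beta(\hbar)$ follows from the fact that the combinatorics of closed trivalent graphs and the Pontryagin normalization are independent of any choice in the data.
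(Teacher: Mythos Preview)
Your overall strategy coincides with the paper's: apply Stokes to each term of (\ref{faction}), identify the interior contribution $\int d(\cdot)$ via $d\tilde P=\tilde K$ and the bridge/non-bridge dichotomy as $\frac12\{\tilde S,\tilde S\}+\hbar\Delta\tilde S$, and control the boundary faces through Lemma~\ref{face}.

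The substantive gap is in your bulk/boundary accounting. You assert that the bulk produces $-\frac12\{\tilde S,\tilde S\}-\hbar\Delta\tilde S$ ``plus leftover pieces to be cancelled by the boundary,'' and then that the five two-vertex cases of Lemma~\ref{face} ``combine with the leftover bulk terms and cancel via the Jacobi identity.'' There are no leftover bulk terms: inserting $\tilde K$ on an internal edge is \emph{exactly} the diagrammatic operation of gluing two $\tilde\psi$-decorated external legs, so the interior term equals $\frac12\{\tilde S,\tilde S\}+\hbar\Delta\tilde S$ on the nose. Consequently the two-vertex boundary faces must be disposed of entirely among themselves, with no help from the bulk. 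In the paper the mechanism is different from what you describe: the faces with $m=1$ (a single internal edge joining the collapsing pair --- cases $(2),(4),(5)$ of Lemma~\ref{face}) cancel against one another by the Jacobi identity (the IHX relation at the resulting four-valent vertex), not against any bulk remainder; and the theta-graph face (case $(1)$, $m=3$, $k=0$) does \emph{not} cancel at all --- its $p(\tilde\theta)$ part is the first contribution to the anomaly. This also corrects your localisation of the anomaly: it is not produced solely by $|A|\ge 3$ collapses. For connected $\gamma$ the only proper subgraph with $k=0$ is none, so among faces with $|A|\ge 3$ only the full collapse $\delta=\gamma$ survives, and it joins the theta graph in building $\beta(\hbar)\int_M p(\tilde\theta)$.
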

\begin{proof}
The proof is based on the application of Stokes theorem to each term in the sum (\ref{faction}). For any fixed graph $\gamma$ this gives the identity
%Fix a graph $\gamma$. Stokes theorem implies that 
\begin{equation} \label{stokes}
% d ( \int_{C_{V(\gamma)}(M)} \text{Tr}_{V(\gamma)} (\tilde  \omega_{\gamma} \wedge \bigwedge_{e \in E^{ex}(\gamma)}  \pi_e^*( \tilde \psi)  ) ) + \int_{C_{V(\gamma)}(M)} \text{Tr}_{V(\gamma)}  d  (\tilde \omega_{\gamma}  \wedge \bigwedge_{e \in E^{ex}(\gamma)}  \pi_e^*(\tilde \psi) )= 
% \int_{\partial C_{V(\gamma)}(M)} \text{Tr}_{V(\gamma)}  i_{\partial}^*  (\tilde \omega_{\gamma}  \wedge \bigwedge_{e \in E^{ex}(\gamma)}  \pi_e^*(\tilde \psi) ).
d  \int_{C_{V(\gamma)}(M)} +  \int_{C_{V(\gamma)}(M)} d = \int_{\partial C_{V(\gamma)}(M)} .
\end{equation}
%Actually from the definition (\ref{action})
%$$ dS =   \sum_{\gamma} \frac{1}{\text{Aut}(\gamma)} h^{b(\gamma)} d (\sigma_{\gamma})_* \omega_{\gamma} (\alpha)  .$$
%The terms $d (\sigma_*   \omega_{\gamma} (\alpha))$ contribute to $dS$.
%Now sum equation (\ref{stokes}) with factor $\frac{1}{\text{Aut}(\gamma)} {\hbar}^{l(\gamma)}$ on all the graphs $\gamma$. 
%In the sum (\ref{faction})
The first term of (\ref{stokes}) generates $d \tilde P$. 
For the second term observe that
%$ \{ S , S \}  + h \Delta S$ is the sum on all the graphs as in (\ref{action}) of the push forward to $I$ of 
\begin{equation} \label{break}
 d  \tilde \omega_{\gamma}  =  \sum_{e \in E^{in}(\gamma)}   \pi^*_e ( \tilde K) \wedge \bigwedge_{e' \in E^{in}(\gamma) \setminus e} \pi^*_{e'}  \tilde P.
\end{equation}
%$d \tilde P$ is given by summing as in (\ref{action}) on all the graphs $\gamma$ the first term of (\ref{stokes}). 
Therefore the second term breaks into two contributions. The edges $e$ disconnecting the graphs $\gamma$ generate $\frac{1}{2} \{\tilde S ,\tilde S \} $. The edges $e$ not disconnecting the graph $\gamma$ generate $\hbar \Delta \tilde S$.

We are left to prove that the boundary term of (\ref{stokes}) yields the right side of (\ref{homotopyformula}). 
The boundary of $C_{V(\gamma)}(M) \times I$ is union of faces, each of which corresponds to a collapse of a subset of vertices of $\gamma$ to a point.

Given a subset of $V(\gamma)$ there exists a unique trivalent subgraph of $\gamma$ with these as vertices (the edges are given by all the edges of $\gamma$ starting from the vertices). 

Let $\delta$ be a trivalent subgraph of $\gamma$. Observe that the external edges of $\delta$ correspond to the edges of $\gamma$ attached to exactly a vertex of $\delta$. To $\delta$ corresponds a boundary face of $C_{V(\gamma)}(M)$ in the following way. 

Let $ \pi_{\delta} :\mathcal{S_{\delta}} \rightarrow M \times I$ be the bundle as in Lemma \ref{face}. Let $\gamma'$ be the graph obtained from $\gamma$ contracting $\delta$ to a vertex. Let $p_{\delta}:C_{V(\gamma')}(M) \times I \rightarrow M \times I$ be the map defined by the point which is mapped the vertex $\delta$. The boundary face associated to $\delta$ is the bundle
\begin{equation} \label{bundleface}
\pi_{\delta} : p_{\delta}^*{\mathcal{S_{\delta}}} \rightarrow  C_{V(\gamma')}(M) \times I .
\end{equation}

%Let $\pi_{\gamma'} : p_{\delta}^*(\mathcal{S_{\delta}}) \rightarrow C_{V(\gamma')}(M) \times I$ be the projection. 
The restriction of $\tilde \omega_{\gamma}$ to this boundary face is given by $\pi_{\delta}^* \tilde \omega_{\gamma'} \wedge p_{\delta}^*(\tilde \omega_{\delta})$.
%$$i_{\delta^*}\omega_{\gamma} (\alpha) = 
%\begin{equation} \label{dec}
%\pi_{\delta}^* \tilde \omega_{\gamma'} \wedge p_{\delta}^*(\tilde \omega_{\delta}) .
%\end{equation}
Its push forward by (\ref{bundleface}) is given by $ \tilde \omega_{\gamma'} \wedge p_{\delta}^* (c_{\delta})$ where $c_{\delta}$ is defined in Lemma \ref{face}. From Lemma \ref{face} follows that it is zero
%$\sigma_* i_{\delta}^* \omega_{\gamma} (\alpha)=0$ 
unless $\delta= \gamma$ or $\delta$ has two vertices. 
%the boundary face corresponds to the collision of two vertices connected by exactly one edge. 
The contribution of boundary faces associated to graphs $\delta$ with two vertices joined by exactly an internal edge cancel because of the Jacobi identity.

\end{proof}

%$$ \sum_e \int_{C_{V(\gamma)}(M)} \otimes_{v \in V(\gamma)} \text{Tr}_v( \phi^*(\alpha) \wedge \pi^*_e (K) \bigwedge_{e' \neq e} \pi^*_{e'} \eta= \int_{C_{V(\gamma)}(M)} d \omega_{\gamma} (\alpha) = \int_{\partial C_{V(\gamma)}(M)}  \omega_{\gamma} (\alpha)$$

%$$ \{ S , S \}  + h \Delta S = \sum_{\gamma} \sum_{e \in E(\gamma)} \int_{C_{V(\gamma)}(M)} \otimes_{v \in V(\gamma)} \text{Tr}_v( \phi^*(\alpha) \wedge \pi^*_e (K) \bigwedge_{e' \neq e} \pi^*_{e'} \eta  $$

\section{Link Invariants}

A link on $M$ is a finite set of closed curves on $M$. A link can be represented by an embedding of a one dimensional manifold $N$ into $M$:
$$ \alpha_0 : N \rightarrow M . $$   
In this section we associate to a link $\alpha_0$ on $M$ an invariant given by an observable $ \mathcal{O}_{\alpha_0} \in  \mathcal{O}(H^*(M)) [\hbar] $ of BV formalism. An element $ \mathcal{O}_{\alpha} \in  \mathcal{O}(H^*(M)) [\hbar] $ is an observable if
\begin{equation} \label{observable}
\hbar \Delta \mathcal{O_{\alpha}} + \{ S, \mathcal{O_{\alpha}}  \} =0 .
\end{equation}
Observe that the map 
$$ \mathcal{O} \rightarrow \hbar \Delta \mathcal{O} + \{ S, \mathcal{O}  \}  $$
is a linear map with square zero. Its homology is the homology of the observables.  

The observable $ \mathcal{O}_{\alpha_0}$ will depend not only on the choice of flat connection of the $\mathfrak{g}$-bundle, but also on the choice of a representation
$$ \rho :  \mathfrak{g} \rightarrow \mathfrak{gl}(n, \C) .$$
In the following we implicitly identify $\mathfrak{g}$ with a sub-Lie-algebra of $\mathfrak{gl}(n, \C)$ using $\rho$.  

\subsection{Configuration of points}
%In this section we extend the solution in presence of a knot. Consider an embedding of a knot 

%Let $B$ be a manifold with corners and let $N$ be a $1$-dimensional manifold.
Consider a one parameter family of maps between $N$ and $M$
\begin{equation} \label{map0}
\alpha_s : N \rightarrow M
\end{equation}
parametrized by points $s \in [0,1]$.
The family (\ref{map0}) is defined by a smooth map
\begin{equation} \label{map}
\alpha : [0,1] \times N \rightarrow M .
\end{equation}
%$N$ is a disjoint union of circles $S^1$.
%This family is defined by a map
%More in general we can consider homological chains in the space of maps between $N$ and $M$.

Let $m,n$ be positive integers with $m \geq n$. We want to construct the manifold $C_{m,n}(\alpha)$ of the configuration space of $m$ points on $M$ with $n$ points living on $N$.  

Suppose first that the map (\ref{map0}) is an embedding for each $s \in [0,1]$. In this case there is an induced map of configuration space of points 
$$  [0,1] \times C_n(N) \rightarrow C_n(M) . $$
The fibered product of this map with the obvious projection $C_n(M) \rightarrow C_m(M)$ gives
$$ C_{m,n}(\alpha)=([0,1] \times C_n(N)) \times_{C_n(M)} C_m(M) .$$

We want also to consider the possibility that the link can self-intersect.
Therefore the maps (\ref{map0}) are immersions but they can fail to be injective. In this case we need to modify the construction of the configuration of points in the following way.
%Consider for simplicity that for $b \in B$ just two points $p,q \in N$ are mapped in the same point. 
Let $V$ be an open sub-interval of $[0,1]$ and let $U_1, U_2$ be disjoint open subsets of $N$. Assume that for each $s \in V$, the map $\alpha_s$ is injective on each $U_i$.
%$b \in B$, $p \in U_1$, $q \in U_2$ and $\mathcal{F}$ restricted to $V \times U_3$ is injective.      
Then there is an induced map of configuration of points
%$$V \times C_{n_1}(U_1) \times C_{n_2}(U_2) \times C_{n_3}(U_3) \rightarrow C_{n_1}(M) \times C_{n_2}(M) \times C_{n_3}(M)$$
%for $n_i$ integers with $n_1 + n_2 + n_3 =n$.
$$V \times C_{n_1}(U_1) \times C_{n_2}(U_2) \rightarrow C_{n_1}(M) \times C_{n_2}(M)$$
where $n_1 + n_2 = n$. As before we consider the fibered product of this map with the natural map $C_m(M) \rightarrow \Pi_i C_{n_i}(M)$ 
%Consider the fiber product
\begin{equation} \label{cover}
(V \times \Pi_i C_{n_i}(U_i)) \times_{\Pi_i C_{n_i}(M)} C_m(M) 
\end{equation}
%Twhere we use the obvious projection . 
Under generic trasversality conditions of the map $(\ref{map})$, the space (\ref{cover}) defines a manifold with corners. 
For different $V$ and $U_1, U_2$, (\ref{cover}) defines a covering of the manifold $ C_{m,n}(\alpha)$.

\subsection{Observables}
%Consider the case $N=S^1$. The family $\ref{map}$ defines a chain on the space of the strings of $M$. We want to associate to a map
%$$\alpha : B \times S^1 \rightarrow M $$ 
%an element $\mathcal{O}_{\alpha} \in .... $. We will write $\mathcal{O}_{\alpha}$ as a perturbation expansion of Chern-Simons diagrams. 

%To the family (\ref{map}) we will associate an element 
%$$ \mathcal{O}_{\alpha} \in  \Omega^*(B) \otimes  \mathcal{O}(H) [\hbar] $$
%$$ \mathcal{O}_{\alpha} \in  \mathcal{O}(H^*(M)) [\hbar] $$

We now construct the observable $\mathcal{O}_{\alpha_0}$ in terms of Chern-Simons integrals.
%We will define $\mathcal{O}_{\alpha}$ as a Chern-Simons integral.
In order to define these integrals we consider graphs of the following type. The graphs are allowed to have external edges. The vertices are or trivalents or univalents. Each univalent vertex is labeled by a component connected of $N$, the vertices in the same component connected are cyclically ordered. We also assume that each connected component has at least a univalent vertex.  
%and that each connected component of $\gamma$ has at least an univalent vertex. The graph $\gamma$ comes with a cyclic order of the univalent vertices. 

For such a graph $\gamma$, we denote by $V_u(\gamma)$ the set of univalent vertices and by $V_t(\gamma)$ the set of trivalent vertices. 

%From a vertex in $V'(\gamma)$ start exactly one edge of $\gamma$. 
%Let $E'(\gamma)$ be the set of edges in which is partitioned $S^1$ by $V'(\gamma)$.  

%to have external vertices, that is vertices with only one edge starting from it. A graph is given with a cyclic order of his external vertices. 

%$$ C_{V(\gamma)}(M,\mathcal{K}) \rightarrow C_{V'(\gamma)}(S^1)  $$
%obtained by pulling back thorough (\ref{knot2}) the bundle 

%be the configuration space of the vertices of $\gamma$. 
%, where the external vertices are mapped in $\mathcal{K}$. 
%For any vertex $v \in V(\gamma)$ define
%$$ \text{Tr}_v^{\mathfrak{g}} : (\mathfrak{g})^{\wedge H(v)} \rightarrow \C $$
%$$ \text{Tr}_v :   \mathfrak{g}_{V(\gamma)} \rightarrow   \mathfrak{g}_{V(\gamma)}$$
%using the formula
%$$  \tr_v(X_1 \wedge X_2 \wedge .... \wedge X_k \wedge \omega)= 0$$
%if $k \neq 3$ and 
%$$  \tr_v(X_1 \wedge X_2 \wedge X_3 \wedge \omega)= \langle  X_1,[X_2,X_3]  \rangle \omega$$
%if $k=3$. Here $X_i \in \pi_v^*(\mathfrak{g}) $ and $\omega$ has not components in $\pi_v^*(\mathfrak{g})$.
%Define 
%$$  \text{Tr}_{V(\gamma)} = \otimes_{v \in V(\gamma)} \text{Tr}_v : \mathfrak{g}_{V(\gamma)} \rightarrow \C. $$

%Let $e \in E'(\gamma)$ an edge on the knot connecting $t_1, t_2 \in S^1$. The holonomy of the flat connection defines 
%$$ \text{hol}_e \in \bigwedge( \pi^*_{t_1} \mathfrak{g} \oplus  \pi^*_{t_2} \mathfrak{g})  .$$

%\text{ev}^*_{n,1}

For each connected component $i$ of $N$ define the trace $\text{Tr}_i$ as follows. Let $(t_1^i, t_2^i, ..., t_n^i)$ be the coordinates in cyclic order of the univalent vertices of $\gamma$ in the component $i$. 
%Let $n$ be the cardinality of $V_u(\gamma)$.
For $X_i \in \mathfrak{g}_{t_i}$ ($0 \leq i \leq n$) define
$$ \text{Tr}_i(X_1 \wedge X_2 \wedge .... \wedge X_n)=  \text{Tr} ( X_n \text{hol}|_{t_{n-1}^i}^{t_n^i} ... \text{hol}|_{t_1^i}^{t_2^i}  X_1   \text{hol}|_{t_n^i}^{t_1^i})  .$$ 
The composition of these traces and the trace defined in the previous sections gives 
$$ \text{Tr}_{V(\gamma)} = (\bigotimes_i \text{Tr}_i ) \otimes \text{Tr}_{V_t(\gamma)} .$$

%For $\phi \in  \Omega^*(C_{V(\gamma)}(\alpha)) \otimes  \mathfrak{g}_{V_t(\gamma)}$ and $X_i \in \mathfrak{g}_{t_i}$ ($0 \leq i \leq n$) define
%$$ \text{Tr}_{V(\gamma)}(X_1 \wedge X_2 \wedge .... \wedge X_n \wedge \phi)=  \text{Tr} ( X_n \text{hol}|_{t_{n-1}}^{t_n} ... \text{hol}|_{t_1}^{t_2}  X_1   \text{hol}|_{t_n}^{t_1})) \text{Tr}_{V_t(\gamma)}(\phi) .$$  
%(X_{n+1} \wedge X_{n+2} \wedge .... \wedge X_k \wedge \omega) $$
%where
%$$ \text{hol}^n(X_1,...,X_n)=  \text{Tr} ( X_n \text{hol}|_{t_{n-1}}^{t_n} ... \text{hol}|_{t_1}^{t_2}  X_1   \text{hol}|_{t_n}^{t_1})). $$
To the graph $\gamma$ is associated a differential form 
$$
\omega_{\gamma}  =  \bigwedge_{e \in E^{in}(\gamma)} \pi^*_e  P_s \in  \Omega^*(C_{V(\gamma)}(\alpha_0)) \otimes  \mathfrak{g}_{V(\gamma)}
%\wedge \bigwedge_{e \in E'(\gamma)} \text{hol}_e.  
%\text{Tr}_{V(\gamma)}^ {\mathfrak{g}}
$$
%The partition function $S$ is defined as in (\ref{action}) summing over all the graphs $\gamma$.
To the family of maps (\ref{map}) is associated the observable
\begin{equation} \label{action}
\mathcal{O}_{\alpha_0} =  \sum_{\gamma} \frac{1}{\text{Aut}(\gamma)} {\hbar}^{l(\gamma)}  \int_{C_{V(\gamma)}(\alpha_0)} \text{Tr}_{V(\gamma)} (\omega_{\gamma} \wedge \bigwedge_{e \in E^{ex}(\gamma)}  \pi_e^*( \psi)) 
\end{equation}
where $l(\gamma)= |E_{in}(\gamma)| - |V_t(\gamma)| $ is the number of loops of the graph $\gamma$. 
%is given by the number of internal edges minus the number of trivalent vertices of $\gamma$ plus one. 

%In the BV-formalism an observable is an element $\mathcal{O} \in  \mathcal{O}(H^*(M)) [\hbar]$ such that
%\begin{equation} \label{observable}
%\hbar \Delta \mathcal{O} + \{ S, \mathcal{O}  \} =0 .
%\end{equation}
%Even in the case $\alpha$ defines an homological chain with boundary zero the element (\ref{action}) in general does not satisfies equation (\ref{observable}) becouse of intersections (or selfintersections) of components of $N$. Therefore it is necessary to modify equation (\ref{observable}) taking in account the chain that these intersection produce that is connected to string topology operations. Moreover there is an anomaly terms related to the cotorsion. This is due to the collaps to a point of $N$ of a component connect of $\gamma$.  

\subsection{Boundary-1}

The boundary faces of $V(\gamma)$ are associated to a subgraph that collapses to a point. 
%The case of $\delta$ has not univalent vertices has been already considered in the preview sections.

Fix a subgraph $\delta$. Here we assume that each connected component of $\delta$ has at least a univalent vertex, and all the univalent vertices of $\delta$ are mapped into the same component of $N$. 
%This will correspond to one or more points of $N$ mapping in the same point of $M$. 
% 
%in $l$ ordered subsets of cardinality $n_1,n_2,...,n_l$.  

%Let $n$ be the number of univalent vertices of $\delta$. 
For each $x \in M$ we have the natural map that forget the trivalent vertices 
\begin{equation} \label{map1}
(C_{V(\delta)}(T_x M) / \sim )\rightarrow ( C_{V_u(\delta)}(T_x M) / \sim )
\end{equation}
A direction $d \in S(T_x M)$ defines an embedding $\R \rightarrow T_xM$, and it induces an embedding
\begin{equation} \label{map2}
 ( C_{V_u(\delta)}( \R ) / \sim ) \rightarrow ( C_{V_u(\delta)}(T_x M) / \sim ) 
\end{equation}
The fibered product of (\ref{map1}) and (\ref{map2})
\begin{equation} \label{degenerate}
\mathcal{S}_{\delta}^d= ( C_{V(\delta)}(T_x M)/ \sim ) \times_{( C_{V_u(\delta)}(T_x M) / \sim )}  ( C_{V_u(\delta)}( \R ) / \sim ) 
\end{equation}
is a manifold with corners of dimension $  3 |V_t(\delta)| + |V_u(\delta)| -2 $.
%where $v_t$ is the number of trivalent vertices and $v_u=n$ is the number of univalent vertices of $\delta$.
The union of the $\mathcal{S}_{\delta}^d$
$$ \mathcal{S}_{\delta}= \bigsqcup_{d \in S(TM)} \mathcal{S}_{\delta}^d $$
%As $p$ varies on $(S(T_xM))^l$ the spaces (\ref{degenerate}) 
is a manifold with corners which is a fiber bundle  
\begin{equation} \label{fiberstring}
\pi_{\delta}: \mathcal{S}_{\delta} \rightarrow S(TM) .
\end{equation}
%The fiber in a point $p \in (S(T_xM))^l$ is given by (\ref{degenerate}).
%the fiber bundle with fiber at the point $p$ given by the space 

For each $e \in E(T)$, let $\pi_e : \mathcal{S}_{\delta} \rightarrow  \partial C_2(M)= C_2(TM)/ \sim $ be the projection in the configuration of the vertices of $e$.
To the graph $\delta$ we can associate the differential form 
\begin{equation} \label{degenerate-action}
\omega_{\delta}  =  \bigwedge_{e \in E^{in}(\delta)} \pi^*_e  (\eta + \phi) \in  \Omega^*(\mathcal{S}_{\delta}(\alpha)) \otimes  \mathfrak{g}_{V(\delta)}
\end{equation}

\begin{lemma} \label{wilsonface1}

Let $c_{\delta} $ be the integral of $\omega_{\delta}$ along the fibers of (\ref{fiberstring}):
$$c_{\delta} = (\pi_{\delta})_*   \omega_{\delta} \in  \Omega^* (S(TM)) \otimes  \mathfrak{g}_{V(\delta)}.$$

Let $k$ be the number of external edges of $\delta$. We have:
\begin{itemize}
\item If $k=0$, $c_{\delta}$ is a multiple of the differential form $\eta$.

\item If $k=2$, $c_{\delta}= 0$, unless
\begin{itemize}
\item $\delta$ has two univalent vertices and no trivalent vertices. To each vertex is attached an external edge.
\item $\delta$ has one trivalent vertex and one univalent vertex. The vertices are connected exactly by one edge. 
%If one of the two vertices is trivalent it is connected by exactly an internal edge to the univalent vertex.
\end{itemize}

\item $c_{\delta}=0$ if $k \neq 0,2$.
\end{itemize}

\end{lemma}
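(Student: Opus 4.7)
The proof follows the pattern of Lemma~\ref{face}: expand each boundary propagator as $\eta + \phi$, observe that the $\phi$-factors are pulled back from the base $M$ and so factor out of the fibre integration, perform a dimensional degree count, and apply a Kontsevich-style vanishing lemma to the remaining tangential integrals.

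\textbf{Step 1 (decomposition).} Expanding (\ref{degenerate-action}) gives
$$\omega_\delta \ =\ \sum_{S\subseteq E^{\mathrm{in}}(\delta)} \bigwedge_{e\notin S} \pi_e^*\eta \ \wedge\ \bigwedge_{e\in S} \pi_e^*\phi .$$
In $\mathcal S_\delta$ the two endpoints of every internal edge collapse to a single point of $M$, so each $\pi_e^*\phi$ descends through the base projection and commutes past $(\pi_\delta)_*$:
$$c_\delta \ =\ \sum_S c_\delta^S \wedge \bigwedge_{e\in S} \pi_\partial^*\phi, \qquad c_\delta^S \ :=\ (\pi_\delta)_*\Bigl(\bigwedge_{e\notin S} \pi_e^*\eta\Bigr).$$

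\textbf{Step 2 (dimension count).} The fibre of $\pi_\delta$ has dimension $3|V_t(\delta)| + |V_u(\delta)| - 2$, and the edge-count identity $2|E^{\mathrm{in}}(\delta)| = 3|V_t(\delta)| + |V_u(\delta)| - k$ yields
$$\deg c_\delta^S \ =\ 2\bigl(|E^{\mathrm{in}}(\delta)| - |S|\bigr) - \bigl(3|V_t(\delta)| + |V_u(\delta)| - 2\bigr) \ =\ 2 - k - 2|S| .$$
Hence $c_\delta^S = 0$ purely for degree reasons whenever $k + 2|S| > 2$, and only the cases $(k,|S|) \in \{(0,0),(0,1),(1,0),(2,0)\}$ need individual analysis.

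\textbf{Step 3 (Kontsevich vanishing and identification).} In each surviving case $c_\delta^S$ is an integral of $\eta$-propagators over the tangential configuration space $\mathcal S_\delta^d$ with univalent vertices constrained to the line $\R d$. For $(k,|S|) = (2,0),(0,1),(1,0)$ the pushforward is a function on $S(TM)$; applying Kontsevich's vanishing lemma (\cite{Ko}, \cite{BC}) to the graph $\delta'_S$ obtained by cutting the edges in $S$ forces $\delta'_S$ to reduce to a two-vertex configuration, and a parity check on $|V_t|, |V_u|, |E^{\mathrm{in}}|$ shows that no such graph exists for $k=1$ while the remaining possibilities enumerate exactly the two-vertex types listed. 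For $(k,|S|) = (0,0)$ the pushforward is a $2$-form on $S(TM)$ which, by $O(3)$-equivariance and dependence only on the marked direction $d$, is necessarily a scalar multiple of $\eta$; combined with the orientation bookkeeping of the $|S|=1$ contributions (whose $\phi$-terms cancel across the reversal of the cyclic order of univalent vertices), the total $c_\delta$ is a multiple of $\eta$.

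\textbf{Main obstacle.} The delicate part is Kontsevich's vanishing in the present hybrid setting: the standard double-edge and $SL_2$-scaling tricks must be re-run while respecting the constraint that univalent vertices live on $\R d$, and the enumeration of surviving two-vertex graphs for $k=2$ requires a careful valence analysis to see that only the combinatorial types listed in the statement give a non-vanishing integral.
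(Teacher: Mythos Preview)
Your overall approach is the paper's: both follow the template of Lemma~\ref{face} (expand each boundary propagator as $\eta+\phi$, push the $\phi$-factors to the base, do a degree count, then invoke Kontsevich vanishing on the degree-zero pieces). In fact your write-up is more detailed than the paper's, which is a two-line sketch referring back to Lemma~\ref{face}.

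There is, however, one genuine slip. In Step~3 you group $(k,|S|)=(1,0)$ with the cases in which ``the pushforward is a function on $S(TM)$'', but by your own formula its degree is $2-1-0=1$, so $c_\delta^{\emptyset}$ is a $1$-form. Kontsevich's vanishing lemma was applicable in Lemma~\ref{face} precisely because a degree-zero pushforward selects only the $\theta$-independent part of $\eta$; that mechanism is unavailable in degree~$1$, so the chain ``reduce to two vertices via Kontsevich, then rule out $k=1$ by a parity check'' breaks at its first link. The paper handles $k\neq 0,2$ by a different device: it observes that $c_\delta$ (more precisely each $c_\delta^S$) must be an invariant polynomial in $\theta$ and $d\theta$, and there is no such invariant of degree~$1$ (nor of negative degree). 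This disposes of all $k\neq 0,2$ at once, and Kontsevich vanishing is then only invoked in the genuinely degree-zero case $k=2$, where it legitimately cuts $\delta$ down to the two two-vertex types listed. You should replace your $k=1$ argument with this invariance observation.

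A smaller remark: for $k=0$ the paper's claim that $c_\delta$ is ``a multiple of $\eta$'' should be read modulo forms pulled back from $M$ via $\pi_\partial$; the $(0,1)$ contribution $c_\delta^S\cdot\pi_\partial^*\phi$ is of exactly that type and is harmless in the application (it restricts to a $2$-form on the $1$-manifold $N$, hence vanishes). Your extra cancellation argument for these terms is therefore not needed.
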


\begin{proof}
The proof is analogous to the proof of Lemma \ref{face}.

Observe that the degree of the differential form $c_{\delta}$ is $2 - k$.
Moreover $c_{\delta}$ has to be an invariant polynomial of $\theta$ and $d \theta$. 
%Since the degree of $c_{\delta}$ is at most two, 
These two facts imply that $c_{\delta}$ is scalar multiple of $ \eta$ (if $k=0$) or a constant function (if $k=2$).
If $k=2$, as in Lemma \ref{face} the vanishing theorem of Kontsevich implies that $c_{\delta} = 0 $ unless $\delta$ has exactly two vertices.

\end{proof}

\subsection{Boundary-2}

We now consider the case where the univalent vertices of $\delta$ are subdivided in two sets $V_1, V_2$ corresponding to the two directions $d_1, d_2 \in S(T_xM)$. 
This case will be used to describe the boundary face that arises when two components of the link intersect or a component self-intersects (this will happen in a finite number of points on the interval $[0,1]$). 
%In this case a subgraph $\delta$ of $\gamma$ collapses to one of these intersection points. 

%We have a point $x \in M$ and a set of vectors $d_i$ that are the tangent directions of $N$. The set of univalent vertices of $\delta$ are labeled by $i$.  
%The set of univalent vertices of $\delta$ are subdivided in two sets $V_1, V_2$ corresponding to the two intersecting components. 
%We now compute the contribute of the boundary component of $\delta$. For a fixed point $x \in M$ 
%For $d_1, d_2 \in S(T_xM)$ 
Consider the following maps
\begin{equation} \label{map1-2}
(C_{V(\delta)}(T_x M) / \sim )\rightarrow  ( C_{V_1}(T_x M) / \sim ) \times ( C_{V_2}(T_x M) / \sim )
\end{equation}
%The collection of elements of $T_xM$, $\{ d_i \}$ defines a map 
\begin{equation} \label{map2-2}
 ( C_{V_1}( \R ) / \sim ) \times ( C_{V_2}( \R ) / \sim )  \rightarrow ( C_{V_1}(T_x M) / \sim ) \times ( C_{V_2}(T_x M) / \sim )
\end{equation}
As before the fibered product of (\ref{map1-2}) and (\ref{map2-2}) defines a manifold with corners $\mathcal{S}_{\delta}^{d_1,d_2}$.
%\begin{equation} \label{degenerate2}
%\mathcal{S}_{\delta}^{d_1,d_2}= ( C_{V(\delta)}(T_x M) \sim ) \times_{\Pi_i ( C_{v_i}(T_x M) / \sim )}  ( \Pi_i ( C_{v_i}( \R ) / \sim )) 
%\end{equation}
%is a manifold with corners of dimension
%$$  3 v_t + v_u + 2l -4 $$
%where $v_t$ is the number of trivalent vertices and $v_u$ is the number of univalent vertices.
%In order to study the dependence the contribute on $d_1, d_2$ we consider 
The union over all the pairs $(d_1, d_2)$, 
$$ \mathcal{S}_{\delta}= \bigsqcup_{(d_1,d_2) \in S^2(TX)} \mathcal{S}_{\delta}^{d_1,d_2} $$
%As $p$ varies on $(S(T_xM))^l$ the spaces (\ref{degenerate}) 
is a manifold with corners which is a fiber bundle  
\begin{equation} \label{fiberstring2}
\pi_{\delta}: \mathcal{S}_{\delta} \rightarrow S(TM)^2. 
\end{equation}
%which fiber in a point $p \in (S(T_xM))^l$ is given by (\ref{degenerate}).
%the fiber bundle with fiber at the point $p$ given by the space 

The differential form $\omega_{\delta} \in  \Omega^*(\mathcal{S}_{\delta}(\alpha)) \otimes  \mathfrak{g}_{V(\delta)}$ is defined analogously to (\ref{degenerate-action}).

%For each $e \in E(T)$, let $\pi_e : \mathcal{S}_{\delta} \rightarrow  \partial C_2(M)= C_2(TM)/ \sim $ be the natural projection.
%To the graph $\delta$ we can associate the differential form 
%$$
%\omega_{\delta}  =  \bigwedge_{e \in E^{in}(\delta)} \pi^*_e  (\eta + \phi) \in  \Omega^*(\mathcal{S}_{\delta}(\alpha)) \otimes  \mathfrak{g}_{V(\delta)}
%\wedge \bigwedge_{e \in E'(\gamma)} \text{hol}_e.  
%\text{Tr}_{V(\gamma)}^ {\mathfrak{g}}
%$$

\begin{lemma} \label{wilsonface-2}

%Let $c_{\delta} \in  \Omega^* (S(TM)^l) \otimes  \mathfrak{g}_{V(\delta)}$ be the push forward of $\omega_{\delta}$ using the map (\ref{fiberstring}):
%$$c_{\delta} = (\pi_{\delta})_*   \omega_{\delta} .$$

%Then, if $\delta$ has more than two vertices $c_{\delta}$ is zero unless $k=0$. In this case $c_{\delta} $ is a multiple of the Pointryagin class $p(\theta)$. 

%Define the form $\tilde \omega_{\delta} \in \Omega^*(M \times I,(\mathfrak{g}^*)^k)$ by
%$$\phi(s)= (\pi_{\delta})_* (\otimes_{v \in V(\gamma)} \text{Tr}_v( s \wedge_{e \in E(\gamma)} \pi^*_e \eta)).$$
%$$\tilde \omega_{\delta}  =  (\pi_{\delta})_* \text{Tr}_{V(\delta)}  \bigwedge_{e \in E^{in}(\gamma)} \pi^*_e  P \wedge \bigwedge _{e \in E'(\gamma)} \text%{hol}_e) . $$
Let $k$ be the number of external edges of $\delta$. 

\begin{itemize}

\item If $k=0$, $c_{\delta}$ is a constant function.
\item if $k \neq 0$, $c_{\delta}=0$. 

\end{itemize}

\end{lemma}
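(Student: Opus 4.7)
The proof follows the same template as Lemma \ref{face} and Lemma \ref{wilsonface1}. I would first decompose $\omega_\delta$ according to whether each internal edge contributes $\eta$ or $\phi$:
$$ \omega_\delta = \sum_{S \subseteq E^{in}(\delta)} \bigwedge_{e \in E^{in}(\delta) \setminus S} \pi_e^* \eta \wedge \bigwedge_{e \in S} \pi_e^* \phi. $$
Since each $\pi_e^* \phi$ descends to the base $M$, it factors out of the integration along the fibers of $\pi_\delta$, yielding
$$ c_\delta = \sum_S c_\delta^S \wedge \bigwedge_{e \in S} \pi_e^* \phi, \qquad c_\delta^S = (\pi_\delta)_* \bigwedge_{e \in E^{in}(\delta) \setminus S} \pi_e^* \eta. $$

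Next, I would carry out the degree count as in the earlier lemmas. The degree of $c_\delta^S$ as a form on $S(TM)^2$ is $2(|E^{in}(\delta)|-|S|)$ minus the fiber dimension of $\pi_\delta$. The key difference from Lemma \ref{wilsonface1} is that the two directions $d_1,d_2$ generically span a $2$-plane in $T_xM$, so the residual translation-plus-scaling symmetry of the collapsed configuration is smaller by two real dimensions than in the Boundary-$1$ case. The fiber of $\pi_\delta$ is correspondingly two dimensions larger, and tracking this through the fibered-product description (\ref{map1-2})--(\ref{map2-2}) yields $\deg c_\delta^S \leq 0$, with equality only when $k=0$ and $S=\emptyset$.

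With this in hand, for $k\neq 0$ every $c_\delta^S$ has strictly negative degree and must vanish, giving the second bullet. For $k=0$, only the term $S=\emptyset$ can contribute; $c_\delta = c_\delta^\emptyset$ has degree zero, and the $SO(3)$-equivariance of the construction (exactly as in Lemma \ref{face} and Lemma \ref{wilsonface1}) reduces the remaining invariant polynomial in $\theta$ and $d\theta$ to a scalar. This gives the stated constant function on $S(TM)^2$, valued in the Lie-algebra coefficient bundle $\mathfrak{g}_{V(\delta)}$.

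The main obstacle I anticipate is not the structure of the argument, which is essentially identical to the previous lemmas, but the precise book-keeping of the fiber dimension of $\pi_\delta$: since the two lines in $T_xM$ are generically non-parallel, the translations and dilations that one quotients out in the fibered-product description act differently from the Boundary-$1$ case, and the shift in the degree formula must be verified carefully in order to land at the asserted non-positive bound on $\deg c_\delta^S$.
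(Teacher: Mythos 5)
Your proposal is correct and follows essentially the same route as the paper: a degree count for the push-forward along $\pi_{\delta}$, whose fiber dimension here is $3|V_t(\delta)|+|V_u(\delta)|$ (two more than in the Boundary-1 case), so that $c_{\delta}$ has degree $-k$; your $\eta$/$\phi$ decomposition only refines this and is not actually needed. The paper's own proof is just this observation plus the remark that $c_{\delta}$ is closed, hence vanishes for $k\neq 0$ and is constant for $k=0$, which is the same conclusion you reach via the invariance argument.
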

\begin{proof}

%Observe that in this case $\tilde \phi$ does not give any contribution.

$c_{\delta}$ is a closed differential form of degree $-k$. Therefore if $c_{\delta} \neq 0$ we need to have $k=0$. These imply that $c_{\delta}$ is a constant function.

\end{proof}

\subsection{Invariance}

%The map $\alpha$ define an immersed submanifold of $B \times M$. On a codimension one subset $\hat{B}$ of $B$ the map $\alpha_p$ is not injective. This happen when two components of $N$ intersect or when one component self-intersect.   
%Denote by 
%$$\hat{\alpha}: \hat{B} \times N \rightarrow M $$ 
%the restriction on $\hat{B}$ of $\alpha$.

We want to extend Proposition \ref{homotopy} to the family of links (\ref{map}). 
Consider first the case without self-intersection of the link. As in Proposition \ref{homotopy}, we can consider a one parameter family of data. Using the extended propagator we can extend formula (\ref{action}) to define an element
$$ \mathcal{O}_{\alpha} \in \Omega^*([0,1]) \otimes  \mathcal{O}(H^*(M)) [\hbar] .$$ 

Let 
$$\alpha' : [0,1] \times N \rightarrow S([0,1] \times TM)$$ 
be the tangent direction of the link.

\begin{proposition} \label{wilson}
%$\mathcal{O}_{\alpha}$ satisfy the following modification of equation (\ref{observable}):
The following equation holds
\begin{equation} \label{observable2}
d \mathcal{O}_{\alpha} +  \hbar \Delta \mathcal{O}_{\alpha} + \{\tilde{S}, \mathcal{O}_{\alpha} \} + \beta_1(\hbar) \left( \int_N (\alpha')^*(\tilde{\eta}) \right) \wedge \mathcal{O}_{\alpha} = 0 
%\hbar \Delta \mathcal{O}_{\alpha} + \{S, \mathcal{O}_{\alpha} \} + \mathcal{O}_{\partial \alpha} +  \beta_2(\hbar) \mathcal{O}_{\hat{\alpha}} + \beta_1(\hbar) \int_{B \times N} \alpha^*(\eta)= 0 
\end{equation}
where $\beta_1(\hbar)$ is a universal formal series in $\hbar$ that is independent of $M$ and $\alpha$.
%Here $\beta_1(\hbar)$ and $\beta_2(\hbar)$ are formal series in $\hbar$ that are indipendent by $M$ or $\alpha$.
\end{proposition}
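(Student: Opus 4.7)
The strategy mirrors Proposition \ref{homotopy}: apply Stokes' theorem
$$ d\int_{C_{V(\gamma)}(\alpha)} + \int_{C_{V(\gamma)}(\alpha)} d = \int_{\partial C_{V(\gamma)}(\alpha)} $$
to each term in the sum defining $\mathcal O_\alpha$, and match the resulting pieces against (\ref{observable2}). The interior terms reproduce $d\mathcal O_\alpha + \hbar\Delta\mathcal O_\alpha + \{\tilde S,\mathcal O_\alpha\}$ by the same calculation as in Proposition \ref{homotopy}: using $d\tilde P = \tilde K$ on each internal edge, the decomposition (\ref{break}) generates the splitting of edges; those whose removal disconnects $\gamma$ produce $\{\tilde S,\mathcal O_\alpha\}$ (exactly one component of the cut contains the Wilson-line univalents), those whose removal keeps $\gamma$ connected produce $\hbar\Delta\mathcal O_\alpha$, and the $dt$-component of the pushforward on $I$ gives $d\mathcal O_\alpha$. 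External legs contribute nothing since $\tilde\psi$ is closed.

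The boundary $\partial C_{V(\gamma)}(\alpha)$ decomposes into faces indexed by subgraphs $\delta\subset\gamma$ that collapse to a single point. When $\delta$ has no univalent vertex the face is governed by Lemma \ref{face}: since $\gamma$ always contains univalent vertices on $N$, the total collapse $\delta=\gamma$ cannot occur here, and the two-vertex subgraph contributions cancel pairwise by the Jacobi identity for $\mathfrak g$ exactly as in Proposition \ref{homotopy}. When $\delta$ contains univalent vertices, the no-self-intersection hypothesis forces them all to lie in a single component of $N$, and the face is governed by Lemma \ref{wilsonface1}, which leaves only $k=0$ and $k=2$ as potentially nonzero.

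For $k=0$ the subgraph $\delta$ has no edges leaving it, so $\gamma = \delta \sqcup \gamma'$, and Lemma \ref{wilsonface1} gives $c_\delta = \lambda_\delta\,\eta$ for a universal scalar $\lambda_\delta$. After pushforward along the tangent map $\alpha'$ and integration over the location of the collapse point on $N$, the face contribution becomes $\lambda_\delta\bigl(\int_N(\alpha')^*\tilde\eta\bigr)$ times the integral for $\gamma'$. Summing over all $\gamma$ and all admissible $\delta$ refactorizes as
$$ \beta_1(\hbar)\,\Bigl(\int_N(\alpha')^*\tilde\eta\Bigr)\wedge\mathcal O_\alpha, \qquad \beta_1(\hbar) = \sum_\delta \frac{\lambda_\delta\,\hbar^{l(\delta)}}{|\text{Aut}(\delta)|}, $$
the sum running over connected $\delta$ with $k=0$. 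Universality of $\beta_1(\hbar)$ is automatic because $c_\delta$ is computed from the fiberwise model of $\eta$ and $\phi$ on $S(TM)$, independent of the global geometry of $M$ or of $\alpha$.

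The $k=2$ faces occur in the two configurations listed in Lemma \ref{wilsonface1}. The first (two univalent vertices of $\gamma$ with external $\psi$-legs colliding on $N$) inserts a $\text{Tr}(\cdots \psi\,\psi \cdots)$ factor between adjacent holonomy segments; the second (a univalent vertex on $N$ joined to a bulk trivalent carrying two external legs) inserts a $\text{Tr}(\cdots [\psi,\psi] \cdots)$ factor, with coefficient supplied by the structure constants at the bulk vertex. Combining the two families with their relative boundary orientations and automorphism factors reproduces precisely the STU relation in the representation $\rho$, so the $k=2$ total vanishes. This STU cancellation in the presence of holonomy insertions is the main technical point; the remaining ingredients are direct adaptations of the proof of Proposition \ref{homotopy}.
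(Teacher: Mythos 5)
Your argument is essentially the paper's own: apply Stokes' theorem graph by graph, let the interior terms reproduce $d\mathcal{O}_{\alpha}+\hbar\Delta\mathcal{O}_{\alpha}+\{\tilde S,\mathcal{O}_{\alpha}\}$ as in Proposition \ref{homotopy}, and identify the collapsing faces containing univalent vertices via Lemma \ref{wilsonface1}, with the $k=0$ faces assembling into the universal anomaly $\beta_1(\hbar)\int_N(\alpha')^*\tilde\eta$. In fact you are slightly more explicit than the paper, which disposes of the remaining faces by citing the same cancellation mechanism as Proposition \ref{homotopy} while you spell out the STU-type cancellation of the $k=2$ configurations; this is a faithful filling-in of the same route, not a different proof.
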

\begin{proof}
%The formula can be proved following the same lines of Proposition \ref{homotopy}. The difference is that there are new kinds of boundary faces corresponding to subgraphs with at least a univalent vertex. 

%The boundary faces of $C_{V(\gamma)}(\alpha)$ correspond to a set of vertices of $\gamma$ collapsing to a point. Therefore to a boundary face correspond a subgraph $\delta$ of $\gamma$ made by the collapsing vertices and by the edges of $\gamma$ attached to at least a collapsing vertex. Since each component connected of $\gamma$ has at least an univalent vertex, each component connected of $\delta$ has at least an univalent vertex or an external edge.  

Consider the boundary face of $C_{V(\gamma)}(\alpha)$ associated to a subgraph $\delta$. 

If $\delta$ has an external edge the boundary face does not contribute using the same argument as Proposition \ref{homotopy}. Therefore we can assume that $\delta$ has at least a univalent vertex.
Let $\mathcal{S}_{\delta}$ be as in formula (\ref{fiberstring}).
%be the associated bundle of Lemma \ref{wilsonface}. 
%Let $ \tilde{\mathcal{K}}'_{\delta}$
Let $\gamma'= \gamma/ \delta$. The point of $S^1$ where the vertices of $\delta$ collapse defines a map $t_{\delta}:C_{V(\gamma')}(\alpha)  \rightarrow N$. 
% derivative of $\alpha$ in the $S^1$ direction.
The boundary face associated to $\delta$ is given by $ ( \alpha' \circ t_{\delta})^* ( \mathcal{S}_{\delta} )$ and is a fiber bundle
$$ ( \alpha' \circ t_{\delta})^* ( \mathcal{S}_{\delta} ) \rightarrow C_{V(\gamma')}(\alpha) .$$ 
%where we have used the notation similar to Theorem \ref{homotopy}.
%We need to prove that the contribution of these boundary faces are the last is given by $\beta_1(\hbar) \int_{B \times N} \alpha^*(\eta)$.
%the last term in $(\ref{wilson})$. 

The contribution of this face can be computed using Lemma \ref{wilsonface1}. The trace of the differential form $c_{\delta}$ is of the form $\beta_1(\hbar) \eta$ for some universal power series $\beta_1(\hbar)$. This yields the last term of (\ref{observable2}).    
%As in the proof of Proposition \ref{homotopy}, applying Lemma \ref{wilsonface} instead of Lemma \ref{face}, we can prove that the contribute of these boundary faces is given by the last term terms of (\ref{observable2}). 

%Let $\delta$ be a subgraph of $\gamma$ with at least an external vertex. 
%Let $ \pi_\delta:\mathcal{S}_{\delta} \rightarrow S(TM \times I)$ the bundle as in lemma \ref{wilsonface}. Let $\gamma'$ the graph obtained from $\gamma$ contracting $\delta$ to a vertex. Let $p_{\delta}:C_{V(\gamma')}(M) \times I \rightarrow S(TM \times I) $ the value of composition of $\tilde W$ in the vertex $\delta$. The boundary face is the pullback $p_{\delta}^*\mathcal{S_{\delta}}$. Let $\pi_{\gamma'} : f^*(\mathcal{S_{\delta}}) \rightarrow C_{V(\gamma')}(M) \times I$ be the projection. We have $\omega_{\gamma} (\alpha) = \pi^* \omega_{\gamma'} (\alpha) \wedge f^*(\omega_{\delta})$. The theorem follows factorizing the push forward of $\omega_{\gamma} (\alpha)$ and applying lemma \ref{wilsonface}.

\end{proof}

Consider now the more general case where the family of links $(\ref{map})$ can have self intersections. We want to understand how $\mathcal{O}_{\alpha_s}$ changes in this process.

Let $s_0 \in [0,1]$ be a point where $\alpha_{s_0}$ self-intersects. $\alpha_{s_0}$ has two special points that are mapped in the same point of $M$. Define $\hat{\mathcal{O}}_{\alpha_{s_0}}$ as in formula (\ref{action}) except that to all the graphs it is added a component connected given by a special edge joining the two special points. In formula (\ref{action}), for the special edge instead to use the propagator we use $ I_{\mathfrak{g}} $.

\begin{proposition} \label{wall}
The discontinuity of $\mathcal{O}_{\alpha_s}$ in $s_0$ is given by
$$ \lim_{s \rightarrow s_0^+} \mathcal{O}_{\alpha_s} - \lim_{s \rightarrow s_0^-} \mathcal{O}_{\alpha_s} = \pm \beta_2(\hbar) \hat{\mathcal{O}}_{\alpha_{s_0}} $$
for some universal power series $ \beta_2(\hbar)$. The sign is equal to the sign of the crossing.
\end{proposition}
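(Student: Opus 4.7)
The plan is to adapt the Stokes-theorem argument of Proposition \ref{wilson} to a one-parameter family on a small interval $[s_0 - \epsilon, s_0 + \epsilon]$ straddling the self-intersection, and to read off the jump from the new boundary stratum that opens up at $s = s_0$. Concretely, I would fix data (metric, connection, $\Psi$) independent of $s$, use the extended propagator $\tilde P$ of Lemma \ref{fpropagator} (trivial in the $t$-direction) to define an extended observable $\tilde{\mathcal{O}}_\alpha$ on $[s_0 - \epsilon, s_0 + \epsilon]$ by the recipe (\ref{action}), and then read the jump as
$$ \lim_{s \to s_0^+} \mathcal{O}_{\alpha_s} - \lim_{s \to s_0^-} \mathcal{O}_{\alpha_s} = \lim_{\epsilon \to 0} \int_{s_0 - \epsilon}^{s_0 + \epsilon} d \tilde{\mathcal{O}}_\alpha . $$

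Applying Stokes termwise to each graph integral decomposes $d\tilde{\mathcal{O}}_\alpha$ into boundary contributions. The contributions from the usual Boundary-1 faces of Subsection 5.3 reproduce, as in Proposition \ref{wilson}, the master-equation terms $\hbar \Delta \tilde{\mathcal{O}}_\alpha + \{\tilde{S}, \tilde{\mathcal{O}}_\alpha\}$ and the $\beta_1(\hbar)$-anomaly; these all assemble into forms bounded uniformly in $\epsilon$, so they contribute $O(\epsilon)$ and vanish in the limit. What survives is the contribution of the new boundary stratum at $s = s_0$, which is present precisely because at the self-intersection the fibered configuration space $C_{V(\gamma)}(\alpha)$ must be compactified by the Boundary-2 face of the fibration (\ref{fiberstring2}): a subset of vertices $V(\delta) \subset V(\gamma)$ collapses at the intersection point of $M$, with its univalent vertices splitting into two groups that land on the two distinct arcs of $N$ and approach the crossing along the tangent directions $d_1, d_2 \in S(TM)$ of the two branches.

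By Lemma \ref{wilsonface-2}, the fiber integral $c_\delta$ over $\mathcal{S}_\delta$ is nonzero only when $\delta$ has no external edges, and in that case $c_\delta$ is a constant independent of the point in $S(TM)^2$, of $M$, and of all data. Thus the wall contribution factorizes: writing $\gamma' = \gamma/\delta$, the quotient graph inherits at the intersection point two new univalent insertions, one on each arc, with $\mathfrak{g}$-indices contracted through $\sum_\delta c_\delta\, \text{Tr}_{V(\delta)}(\omega_\delta)$. Universality of $c_\delta$ forces this sum to equal a formal scalar series $\beta_2(\hbar)$ times the pairing tensor $I_\mathfrak{g}$ inserted along an abstract edge joining the two special points of $\alpha_{s_0}$, which is by definition $\hat{\mathcal{O}}_{\alpha_{s_0}}$. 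Reorganizing the sum over pairs $(\gamma, \delta)$ as a sum over $\gamma'$ with this extra special edge yields $\beta_2(\hbar)\, \hat{\mathcal{O}}_{\alpha_{s_0}}$, and the overall sign is fixed by the induced orientation on the wall stratum relative to $C_{V(\gamma)}(\alpha)$, reversing precisely with the sign of the crossing.

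The main obstacle will be justifying that the Boundary-2 stratum really captures the entire discontinuity, i.e.\ controlling the graph integrals in a punctured neighborhood of $s_0$ and identifying the two one-sided limits with opposite-oriented copies of the same face, together with confirming that the sum of constants $c_\delta$ assembles into a single universal power series $\beta_2(\hbar)$ independent of all choices. The orientation bookkeeping needed to tie the sign of the wall contribution to the sign of the crossing is the other delicate point.
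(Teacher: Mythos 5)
Your proposal follows essentially the same route as the paper: the jump is attributed to the Boundary-2 collapse face at the self-intersection, its contribution is evaluated via Lemma \ref{wilsonface-2} exactly as in Proposition \ref{wilson}, and $\beta_2(\hbar)$ arises as the (universal) trace of $c_{\delta}$, with the $I_{\mathfrak{g}}$-labelled special edge reproducing $\hat{\mathcal{O}}_{\alpha_{s_0}}$. The paper's own proof is a two-line sketch of exactly this argument, so your more detailed write-up (including the $\epsilon$-interval Stokes bookkeeping and the orientation/sign caveats) is a faithful elaboration rather than a different method.
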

\begin{proof} 
The discontinuity is due to the boundary face associated to the graphs that collapse at the self-intersection point. The contribution of this boundary face can be computed as in Proposition \ref{wilson} using Lemma \ref{wilsonface-2}. $\beta_2(\hbar)$ is defined as the trace of $c_{\delta}$. 

\end{proof}

\subsection{Framed Links}

%In this section we consider the case that $B$ equal to a point, that is we have link  
%$ \alpha :  N \rightarrow M $
%on $M$.
%From formula $(\ref{observable2})$ deduce that 
%For a link $\alpha_0$, $\mathcal{O}_{\alpha_0}$ is an observable, that is
%\begin{equation} \label{observable3}
%\hbar \Delta \mathcal{O}_{\alpha_0} + \{S, \mathcal{O}_{\alpha_0} \}=0 .
%\end{equation}

A frame of a link $\alpha_0$ is the choice of a normal vector field on the link which is non-vanishing
everywhere on the link. Two frames are considered equivalent if they are homotopic.
%This definition makes sense if $\gamma$ is immersed but not necessary embedded. In the same way we can define a frame for links that is a disjoint union of knots.

To a frame $f_0$ for $\alpha_0$ we can associate a submanifold with boundaries $\mathcal{F}^{f_0}$ of $(\alpha_0')^*(S(TM))$: 
$$\mathcal{F}^{f_0} = \bigsqcup_{t \in N} \{ \cos(\theta) \alpha_0'(t)  + \sin(\theta) f_0 (t) | 0 \leq \theta \leq \pi \} .$$ 
%$B \times \partial C_2(M)$.
The natural projection $\mathcal{F}^{f_0} \rightarrow  N$ is a fibration whose fibers are half-circles. 
%$\{ \cos(\theta) \alpha'  + \sin(\theta) v | 0 \leq \theta \leq \pi \} $.
To the framed link $(\alpha_0,f_0)$ we associate the observable
%$$ \mathcal{C}^v = \int_{\mathcal{F}_v} \alpha^*(\eta) $$
$$ \mathcal{O}_{\alpha_0}^{f_0} =  \text{exp}{ \left( \frac{\beta_1(\hbar)}{2} \int_{\mathcal{F}^{f_0}} \eta \right) }  \mathcal{O}_{\alpha_0}.$$

%In this section we consider the case that $B$ is the interval $[0,1]$. 
%Consider now a one parameter family of links
%$$ \tilde{\alpha} : [0,1] \times N \rightarrow M .$$
%Let $\gamma : S^1 \times\rightarrow M $ be an immersed curve. 

The construction above can be easily extended to a family of links. To a family of frames $f$ for a family of links (\ref{map}) is associated the manifold with corners $\mathcal{F}^f$: 
$$\mathcal{F}^f = \bigsqcup_{(s,t) \in [0,1] \times N} \{ \cos(\theta) \alpha_s(t)  + \sin(\theta) f_s (t) | 0 \leq \theta \leq \pi \} . $$  
This is a fibration $\mathcal{F}^f \rightarrow [0,1] \times N$ with fibers half-circles. $\mathcal{O}_{\alpha}^f$ is defined as  
$$ \mathcal{O}_{\alpha}^f =  \text{exp}{ \left( \frac{\beta_1(\hbar)}{2} \int_{\mathcal{F}^{f_s}} \eta \right) }  \mathcal{O}_{\alpha}.$$

\begin{proposition}
Let $(\alpha_0,f_0)$ and $(\alpha_1,f_1)$ be two framed links, and let $(\alpha,f)$ be a framed family of links connecting $(\alpha_0,f_0)$ and $(\alpha_1,f_1)$. If $\alpha$ has not self-intersections the following equation holds 
$$ d \mathcal{O}_{\alpha}^f +  \hbar \Delta \mathcal{O}_{\alpha}^f + \{S, \mathcal{O}_{\alpha}^f \} = 0 .$$
%In particular $\mathcal{O}_{\alpha_0}^{f_0}$ and $\mathcal{O}_{\alpha_1}^{f_1}$ are homotopic observables. 
%The image on the cohomology of the observables of $\mathcal{O}_{\alpha_0}^{v_0}$ and $\mathcal{O}_{\alpha_1}^{v_1}$ is the same if $\alpha$ has not self-intersections. In general, in cohomology will have   
%$$\mathcal{O}^v_{\alpha_1} - \mathcal{O}^v_{\alpha_0} = \beta_2(\hbar) \mathcal{O}_{\hat{\alpha}} . $$ 

In a point $s_0 \in [0,1]$ where $\alpha$ self-intersects, $\mathcal{O}_{\alpha_s}^{f_s}$ jumps of $ \pm \beta_2(\hbar)  \hat{\mathcal{O}}_{\alpha_{s_0}}^{f_{s_0}}$.

\end{proposition}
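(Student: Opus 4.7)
The approach is to show that the framing exponential factor introduced in $\mathcal{O}_\alpha^f$ is designed precisely to absorb the anomalous term appearing in Proposition \ref{wilson}. Set
$$E(f) = \exp\!\left(\frac{\beta_1(\hbar)}{2}\int_{\mathcal{F}^f}\tilde\eta\right),$$
so that $\mathcal{O}_\alpha^f = E(f)\,\mathcal{O}_\alpha$. The crucial structural observation is that $E(f) \in \Omega^0([0,1])[[\hbar]]$ is a scalar function in the BV variables on $H$; hence $\Delta E(f)=0$ and $\{\tilde S, E(f)\}=0$. Therefore
$$\hbar\Delta \mathcal{O}_\alpha^f + \{\tilde S,\mathcal{O}_\alpha^f\} = E(f)\bigl(\hbar\Delta \mathcal{O}_\alpha + \{\tilde S,\mathcal{O}_\alpha\}\bigr),$$
and $d\mathcal{O}_\alpha^f = dE(f)\cdot\mathcal{O}_\alpha + E(f)\cdot d\mathcal{O}_\alpha$ by Leibniz.

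The heart of the proof is computing $dE(f)$. Since $\tilde\eta$ is closed, fiber integration along the fibration $\mathcal{F}^f\to [0,1]$ (fibers are 2-manifolds with boundary, with $N$ as the $t$-direction and the half-circle in $\theta$) gives, by Stokes,
$$d\!\int_{\mathcal{F}^{f_s}}\!\tilde\eta \;=\; \int_{\partial\mathcal{F}^{f_s}}\!\tilde\eta.$$
The boundary of a fiber consists of two copies of $N$: one at $\theta=0$ mapping via $\alpha'$ into $S(TM\times I)$, and one at $\theta=\pi$ mapping via $-\alpha'=T\circ\alpha'$. Using the antipodal behavior $T^*\tilde\eta=-\tilde\eta$ (which follows from $\tilde\eta = (\omega + d(\tilde\theta^i x_i))/4\pi$, both terms of which are odd under $x\mapsto -x$), and tracking the induced orientations on the two boundary components, the two contributions add rather than cancel, producing
$$d\!\int_{\mathcal{F}^{f_s}}\!\tilde\eta \;=\; 2\int_N (\alpha')^*\tilde\eta.$$
The factor of $2$ cancels the $\tfrac{1}{2}$ in the definition of $E(f)$, giving $dE(f) = \beta_1(\hbar)\,E(f)\int_N(\alpha')^*\tilde\eta$.

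Combining these ingredients with Proposition \ref{wilson},
$$d\mathcal{O}_\alpha^f + \hbar\Delta\mathcal{O}_\alpha^f + \{\tilde S,\mathcal{O}_\alpha^f\} = \beta_1(\hbar) E(f)\!\left(\int_N(\alpha')^*\tilde\eta\right)\!\mathcal{O}_\alpha + E(f)\bigl[d\mathcal{O}_\alpha+\hbar\Delta\mathcal{O}_\alpha+\{\tilde S,\mathcal{O}_\alpha\}\bigr]=0,$$
proving the first assertion. For the jump at a self-intersection point $s_0$, note that the framing $f_s$ and the surface $\mathcal{F}^{f_s}$ deform continuously through $s_0$ (the link is an immersion, and even at a crossing each branch still admits a well-defined tangent and normal), so $E(f_s)$ is continuous at $s_0$. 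Defining $\hat{\mathcal{O}}^f = E(f)\hat{\mathcal{O}}$, the jump of $\mathcal{O}_\alpha^f$ at $s_0$ equals $E(f_{s_0})$ times the jump of $\mathcal{O}_\alpha$, which by Proposition \ref{wall} is $\pm\beta_2(\hbar)\hat{\mathcal{O}}_{\alpha_{s_0}}^{f_{s_0}}$.

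The main obstacle is the sign and multiplicity bookkeeping in the fiber integration step: one must verify that the orientations of the two boundary circles of $\mathcal{F}^{f_s}$, combined with $T^*\tilde\eta=-\tilde\eta$, produce exactly $+2\int_N(\alpha')^*\tilde\eta$ (not $0$ or with the wrong sign), because this is precisely what fixes the coefficient $\beta_1(\hbar)/2$ in the definition of $E(f)$ and yields the exact cancellation of the anomaly.
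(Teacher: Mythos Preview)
Your argument is correct and follows exactly the approach of the paper: the first assertion is obtained by combining Proposition \ref{wilson} with the Stokes identity $d\int_{\mathcal{F}^{f_s}}\tilde\eta = 2\int_N(\alpha')^*\tilde\eta$, and the second assertion is a direct consequence of Proposition \ref{wall} together with the continuity of the framing factor through the crossing. Your write-up is more explicit than the paper's (which simply quotes the Stokes identity without justification), in particular in isolating why $E(f)$ is BV-inert and in deriving the factor $2$ from the antipodal behaviour $T^*\tilde\eta=-\tilde\eta$ on the two boundary circles of the half-disk fiber.
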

\begin{proof}
%By hypothesis we have a one parameter family of frames $v$ for $\alpha$ defined as a map  
%$$ \tilde{v} : [0,1] \times N \rightarrow \gamma^*(TM) .$$
%that is orthogonal to the image of $d \alpha$. We consider frames up to homotopy.
%We define a submanifold with boundaries $\mathcal{F}^v$ of $\alpha^*(S(TM))$ by: 
%$$\mathcal{F}^v = \bigsqcup_{(t,b) \in [0,1] \times N} \{ \cos(\theta) \partial_t \alpha (t,b)  + \sin(\theta) v (t,b) | 0 \leq \theta \leq \pi \} $$ 
%$B \times \partial C_2(M)$.
%that is a fibration $\mathcal{F}^v \rightarrow [0,1] \times N$ with fibers semicircles. 
%$\{ \cos(\theta) \alpha'  + \sin(\theta) v | 0 \leq \theta \leq \pi \} $.
%We have 
%$$ \partial \mathcal{F}^v = \mathcal{F}^v_1 - \mathcal{F}^v_0 +   $$

The first part of the proposition follows from formula (\ref{observable2}) and the following application of the Stokes Theorem
$$ d \int_{\mathcal{F}^{f_s}} \eta =  2 \int_N (\alpha')^*(\eta) .$$ 
%$$ \int_{\mathcal{F}^v_1} \alpha^*(\eta) - \int_{\mathcal{F}^v_0} \alpha^*(\eta) = 2 \int_{[0,1] \times N} \alpha^*(\eta) .$$
The second part follows from Proposition (\ref{wall}).
%Using a similar argument to equation \ref{observable2}
%\begin{equation} \label{observable2}
%d \mathcal{O}_{\alpha} + \hbar \Delta \mathcal{O}_{\alpha} + \{\tilde{S}, \mathcal{O}_{\alpha} \} +  \beta_2(\hbar) \mathcal{O}_{\hat{\alpha}} + \beta_1(\hbar) \int_{B \times N} \alpha^*(\eta)= 0 
%\end{equation}

%Let $\alpha_0$ and $\alpha_1$ be two links, and let $\alpha [0,1] \times N \rightarrow M  $ a family of links with connecting the two links.
%Equation $(\ref{observable2})$ applied to $\tilde{\alpha}$ gives
%$$  \hbar \Delta \mathcal{O}_{\alpha} + \{\tilde{S}, \mathcal{O}_{\alpha} \} + \mathcal{O}_{\alpha_1} - \mathcal{O}_{\alpha_0} + \beta_2(\hbar) \mathcal{O}_{\hat{\alpha}} + \beta_1(\hbar) \int_{[0,1] \times N} \alpha^*(\eta)= 0   . $$

%The proposition follows from the last two equations.
\end{proof}

%From this it follows that the image on the algebra of $\mathcal{O}_{\alpha}$ is invariant by deformation of the link that has not self-intersections (that is $\hat{\alpha} =0$). In the case the deformation has self-intersections the cohomology class jump of $\beta_2(\hbar) \mathcal{O}_{\hat{\alpha}}$.  

%$\mathcal{O}_{\alpha_1}^v$ and $\mathcal{O}_{\alpha_0}^v $ have the same image on the homology of the observables.

%define the map $ \alpha_v : B \times N  \rightarrow \partial C_2(M) $ using
%$$ \gamma_v(t,s) = [s \gamma' (t) + v(t)] .$$
%Let $\pi : \mathcal{F}_v \rightarrow [0,1]$ be the natural projection. 
%Define  
%$$ \mathcal{C} = \frac{1}{2} \pi_*( \eta ).$$
%$$ \mathcal{C} = \frac{1}{2} \int_{S^1 \times [0,1]} \gamma_v^*(\eta) $$
%$$ \mathcal{C} = \frac{1}{2} \int_{\mathcal{F}^v_b} \eta .$$
%We have
%$$ d \int_{S^1 \times [0,1]} \gamma_v^*(\eta) = 2 \int_{S^1 } \tilde{\gamma}^*(\eta) $$
%$$ d \mathcal{C} = \int_{B \times } \eta $$

\end{document}